\documentclass[twoside]{article}

\usepackage{amssymb,amsfonts,amssymb,amsthm,amsmath}
\usepackage{mathrsfs}                      
\usepackage{graphicx}                      
\usepackage[latin1]{inputenc}             
\usepackage{color}

\hyphenation{bo-un-da-ry pu-bli-ca-ti-on ame-ri-can asy-mp-to-ti-ca-lly be-ha-vi-or mo-du-la-ted}                   


\topmargin =-40pt \marginparwidth = 45pt
\evensidemargin = -20pt \oddsidemargin = -15pt \textheight = 670pt
\textwidth = 500pt \linespread{1.2}



\begin{document}

\title{Continuity of attractors for a nonlinear parabolic problem with terms concentrating in the boundary}

\author{Gleiciane S. Arag\~ao\thanks{Universidade Federal de S\~ao Paulo, UNIFESP, Diadema, Brazil, e-mail: gleiciane.aragao@unifesp.br, Partially
supported by FAPESP 2010/51829-7, Brazil.}\,\,,\, Ant\^onio L. Pereira\thanks{Instituto de Matem\'atica e Estat\'istica, USP, S\~ao Paulo, Brazil, e-mail: alpereir@ime.usp.br, Partially supported by CNPq 308696/2006-9, FAPESP 2008/55516-3, Brazil.}\,\, and\, Marcone C. Pereira\thanks{Escola de Artes, Ci\^encias e Humanidades, USP, S\~ao Paulo, Brazil, e-mail: marcone@usp.br, Partially supported by CNPq 305210/2008-4 and 302847/2011-1, FAPESP 2008/53094-4 and 2010/18790-0, Brazil.}}
\date{}
\maketitle \thispagestyle{empty} \vspace{-10pt}

\begin{abstract}
We analyze the dynamics of the flow generated  by a nonlinear parabolic problem when some reaction and potential terms are concentrated in a neighborhood of the boundary. We assume that this neighborhood shrinks to the boundary as a parameter $\epsilon$ goes to zero. Also, we suppose that the ``inner boundary'' of this neighborhood presents a highly oscillatory behavior. 
Our main goal here is to show the continuity of the family of attractors with respect to $\epsilon$. Indeed, we prove upper semicontinuity under the usual properties of regularity and  dissipativeness and, assuming hyperbolicity of the equilibria, we also show  the lower semicontinuity of the attractors at $\epsilon=0$.  

\vspace{0.2cm}

\noindent \textit{ 2010 Mathematics Subject Classification}:  35R15, 35B40, 35B41, 35B25.  \\
\noindent \textit{Key words and phrases}: Partial differential equations on infinite-dimensional spaces, asymptotic behavior of solutions, attractors, singular perturbations, concentrating terms, oscillatory behavior, lower semicontinuity.

\end{abstract}

\section{Introduction}
\label{introd}

\ \ \ \ Let $\Omega \subset \mathbb{R}^2$ be an open bounded set with a $C^2$-boundary $\partial \Omega$ and $g_\epsilon(\cdot)$ a function satisfying $0 < g_0 \leqslant g_\epsilon(\cdot) \leqslant g_1$ for fixed positive constants $g_0$ and $g_1$, which may oscillate as the small parameter $\epsilon \to 0$. 
This is expressed by  
$$
g_\epsilon(s) =g(s,s/\epsilon),
$$
where the function $g : (0, T) \times \mathbb{R} \mapsto \mathbb{R}$, $T>0$, is a positive smooth function such that $y \to g(x, y)$ is $l(x)$-periodic in $y$ for each $x$, with period $l(x)$ uniformly bounded in $(0,T)$, that is, $0<l_0<l(\cdot)<l_1$.

Also, let $x,y \in C^{2}([0,T])$ such that the curve $\zeta(s)=(x(s),y(s))$, $s\in[0,T]$, is a $C^{2}$-parametrization of the boundary $\partial \Omega$ with $\left\|\zeta'(s)\right\|=1$, for all $s\in [0,T]$. We also assume  that $N(\zeta(s))=(y'(s),-x'(s))$ is the unit outward normal vector to $\partial \Omega$, and  we define the $\epsilon$-strip neighborhood for the boundary $\partial \Omega$ by  
$$
\omega_{\epsilon}=\left\{ \xi\in \mathbb{R}^{2} \; : \; \xi= \zeta(s)-tN(\zeta(s)), \quad s\in [0,T] \quad \mbox{and} \quad 0\leqslant t<\epsilon \, g_{\epsilon}(s) \right\}, 
$$
for $\epsilon>0$ sufficiently small, say $0<\epsilon\leqslant\epsilon_{0}$. 

Observe that our assumptions include the case where the oscillating function $g_\epsilon$ presents a purely periodic behavior as, for instance,  $g_\epsilon(s) = 2 + cos(s/\epsilon)$, but also contain the case where $g_\epsilon$ is not periodic and the amplitude is modulated by a function. 
For small $\epsilon$, the set $\omega_{\epsilon}$ is a neighborhood of $\partial \Omega$ in $\bar{\Omega}$, that collapses to the boundary when the parameter $\epsilon$ goes to zero. Note that the ``inner boundary'' of $\omega_{\epsilon}$,
$
\left\{ \xi\in \mathbb{R}^{2} \; : \; \xi= \zeta(s)- g_{\epsilon}(s) N(\zeta(s)), \quad s\in [0,T]\right\}, 
$  presents a highly oscillatory behavior. Moreover, the height of $\omega_{\epsilon}$, the amplitude and period of the oscillations are all of the same order, given by the small parameter $\epsilon$. 
See Figure 1 that illustrates the oscillating strip $\omega_{\epsilon}$ for the purely periodic case.

\begin{figure}[!h]
  \centering
  \includegraphics[width=.40\columnwidth]{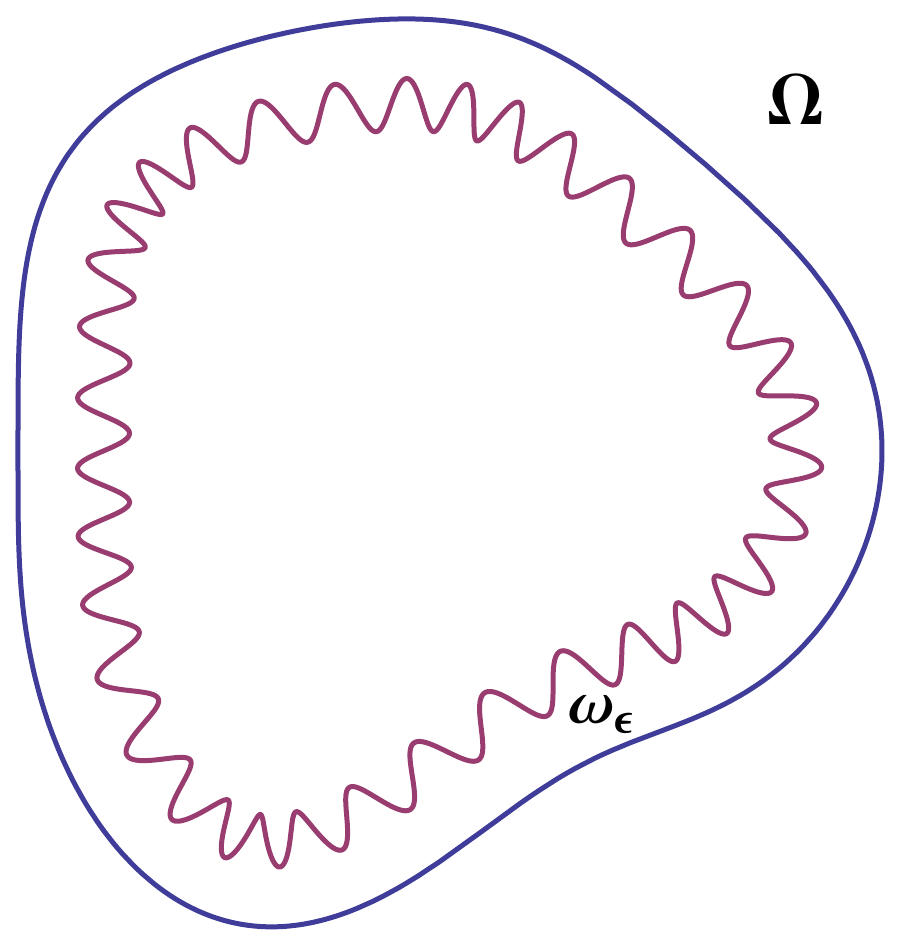} 
  \caption{The open set $\Omega$ and the oscillating strip $\omega_\epsilon$.}
  \label{figfuncaog} 
\end{figure}

In this work we are interested in the asymptotic behavior of the solutions of the nonlinear parabolic problem
\begin{eqnarray}
\label{1}
\left\{ \begin{array}{lll}
         \frac{\partial u_\epsilon}{\partial t}-\Delta u_\epsilon+\lambda u_\epsilon+\frac{1}{\epsilon}\mathcal{X}_{\omega_{\epsilon}}V_{\epsilon}u_\epsilon= \frac{1}{\epsilon}\mathcal{X}_{\omega_{\epsilon}}f(u_\epsilon) &   \mbox{ in } \Omega, \; t > 0,\\
         \frac{\partial u_\epsilon}{\partial N}=0  & \mbox{ on } \partial\Omega, \; t>0,\\
         u_\epsilon(0)=\phi^{\epsilon}\in H^{1}(\Omega),
\end{array} \right.
\end{eqnarray}
as $\epsilon>0$ goes to zero.
$\mathcal{X}_{\omega_{\epsilon}}$ is the characteristic function of the set $\omega_{\epsilon}$, $\lambda$ is a suitable real number and  the nonlinearity $f:\mathbb{R} \mapsto \mathbb{R}$ is a $C^{2}$-function. 
We assume that there exists $C>0$ independent of $\epsilon$ such that the family of potential $V_{\epsilon} \in L^\infty(\Omega)$ satisfies
\begin{eqnarray}
\label{hip1}
\frac{1}{\epsilon} \int_{\omega_{\epsilon}} \left|V_{\epsilon}(x,y) \right|^{2} \, dx dy \leqslant C.
\end{eqnarray} 
Also, we suppose there exists a function $V_{0}\in L^{2}(\partial \Omega)$ which is the weak  limit of the concentrating term 
\begin{eqnarray}
\label{hip2}
\lim_{\epsilon\to 0} \frac{1}{\epsilon} \int_{\omega_{\epsilon}} V_{\epsilon} \, \varphi \, d\xi = \int_{\partial \Omega}V_{0} \, \varphi \, dS, \qquad \mbox{$\forall$ $\varphi \in C^{\infty}(\bar{\Omega})$}.
\end{eqnarray}    

We are using here the  characteristic functions $\mathcal{X}_{\omega_{\epsilon}}$ depending on a small positive parameter $\epsilon$ modeling the concentration on the region $\omega_\epsilon \subset \bar \Omega$ through the term 
$$
\frac{1}{\epsilon} \mathcal{X}_{\omega_{\epsilon}} \in L^\infty(\Omega).
$$
Roughly, we are assuming that the reactions of the problem \eqref{1} occur only in an extremely oscillating thin region near the border. 
Furthermore, we also allow potential terms concentrating in this strip. 
In some sense, we will prove that this singular problem can be approximated by a parabolic problem with nonlinear boundary conditions, where the oscillatory behavior of the neighborhood is captured as a flux condition and a potential term on the boundary.

It is reasonable to expect that the family of solutions $u_\epsilon$ will converge to a
solution of  an equation of the same type with nonlinear boundary condition on $\partial \Omega$ since $\omega_\epsilon$ is a thin strip ``approaching''  $\partial \Omega$.
Indeed, we show that 
under certain conditions, the limit problem of (\ref{1}) is the following parabolic problem with nonlinear boundary conditions  
\begin{eqnarray}
\label{2}
\left\{ \begin{array}{lll}
         \frac{\partial u_0}{\partial t}-\Delta u_0+\lambda u_0=0 & \mbox{ in } \Omega, \; t>0,\\
         \frac{\partial u_0}{\partial N}+V_{0}u_0=\mu \,  f(u_0) & \mbox{ on } \partial \Omega, \; t>0,\\
         u_0(0)=\phi^{0}\in H^{1}(\Omega),
        \end{array} \right.
\end{eqnarray}
where the boundary coefficient $\mu \in L^\infty( \partial \Omega)$ is related to the oscillating function $g_\epsilon$ and is given by 
\begin{equation} \label{mu}
\mu(s) = \mu(\zeta(s))=\frac{1}{l(s)} \int_0^{l(s)} g(s,\tau) \, d\tau, \qquad \mbox{$\forall$ } s \in (0,T).
\end{equation}
As mentioned, we get a limit problem with a nonlinear boundary condition that captures the oscillatory behavior of the ``inner boundary'' of the set $\omega_{\epsilon}$. This nonlinear boundary condition  includes the function $\mu(s)$, the mean value of $g(s,\cdot)$ for each $s \in (0,T)$.

We are interested in the behavior of the attractors of (\ref{1}) and (\ref{2}) for small $\epsilon>0$. We will show that they are continuous at $\epsilon=0$. 
Recall that	an attractor is a compact invariant set which attracts all bounded sets of the phase space of a dynamical system.


This kind of problem was initially studied in~\cite{arrieta}, where linear elliptic equations were considered. There, the neighborhood is a strip of width $\epsilon$ and base in a portion of the boundary, without oscillatory behavior. Later, the asymptotic behavior  of a  parabolic problem of the same type was analyzed in~\cite{anibal,anibal2}, where the upper semicontinuity of attractors at $\epsilon=0$ was proved. The same technique of~\cite{arrieta} has been used in~\cite{gleicesergio1,gleicesergio2}, where the results of~\cite{arrieta,anibal} were extended to a reaction-diffusion problem with delay. In these works, the boundary of the domain is smooth. 
Recently, in~\cite{gam1}, some results of~\cite{arrieta} were adapted to a nonlinear elliptic problem posed on a Lipschitz domain $\Omega$ presenting a highly oscillatory behavior on the neighborhood of the  boundary using some ideas of~\cite{marcone1,marcone2}, where elliptic and parabolic problems defined in thin domains with a highly oscillatory behavior have been extensively studied.
 
The goal of our work is to extend the results of~\cite{anibal,anibal2} to a  parabolic problem in which the ``inner boundary'' of  $\omega_{\epsilon}$ presents a highly oscillatory behavior. 
Moreover, assuming hyperbolicity of the equilibria of the limit problem,  we also obtain results on the  lower semicontinuity of the attractors. Our approach will be somewhat different from the one in \cite{anibal,anibal2} and closer to the one in \cite{antoniomarcone}, where some abstract results on the continuity of invariant manifolds were  obtained.
 Throughout this work, we  suppose the nonlinearity $f: \mathbb{R} \mapsto \mathbb{R}$ is a $C^{2}$-function satisfying the dissipativeness assumption
\begin{eqnarray}
\label{dissipative}
\limsup_{\left|s\right|\to\infty} \frac{f(s)}{s}<0.
\end{eqnarray}

It has been shown that the parabolic problems (\ref{1}) and (\ref{2}) are well posed in $H^{1}(\Omega)$ and, for each $0\leqslant \epsilon \leqslant \epsilon_{0}$, we have well defined nonlinear semigroup in $H^{1}(\Omega)$ associated to the solutions of (\ref{1}) and (\ref{2}),
$$
T^{\epsilon}(t)\phi = u_\epsilon(t,\phi), \qquad \textrm{ with } t\geqslant0 \textrm{ and } \phi \in H^1(\Omega),
$$  
see for example~\cite{alexandreat,OP}. Moreover, under assumption (\ref{dissipative}), the problems (\ref{1}) and (\ref{2}) have a global attractor $\mathscr{A}_{\epsilon}$, which is bounded in $L^{\infty}(\Omega)$, uniformly in $\epsilon$. 
In particular, if the initial conditions are uniformly bounded, then all solutions of (\ref{1}) and (\ref{2}) are bounded with a bound independent of $\epsilon$. This enables us to cut the nonlinearity $f$ in such a way that it becomes bounded with bounded derivatives up to second order without changing the attractors. Therefore, we may assume without loss of generality that 

\begin{itemize}
\item[(H)] $f: \mathbb{R} \mapsto \mathbb{R}$ is a $C^{2}$-function satisfying (\ref{dissipative}) and
$$
\left|f(u)\right|+\left|f'(u)\right|+\left|f''(u)\right|\leqslant K,\qquad \mbox{$\forall$ $u\in \mathbb{R}$},
$$
for some constant $K>0$. 
\end{itemize}

Although we restrict our attention to nonlinearities independent of the spacial variable, the method can be easily adapted for the case $f=f(x,u)$ depending on $x \in \Omega$. It is worth to mention that we also can consider reactions occurring on the whole region, instead of 
 concentrating on the boundary. In this case, the limit problem would be a non-homogeneous parabolic problem in $\Omega$ with nonlinear boundary conditions.

The paper is organized as follows: in Section~\ref{concentrating}, we describe 
some  technical results, in particular some concerning the concentrating integrals defined in \cite{arrieta}. 
In Section~\ref{abstract}, we introduce an abstract setting to deal the problems (\ref{1}) and (\ref{2}). 
In Section~\ref{continuityeq}, we obtain the upper semicontinuity of attractors at $\epsilon=0$ in $H^{1}(\Omega)$ and prove the continuity of the set of equilibria, assuming that the equilibrium points of (\ref{2}) are hyperbolic. 
In Section~\ref{manifolds}, we show the continuity of the local unstable manifolds near a hyperbolic equilibrium, from which the lower semicontinuity of attractors at $\epsilon=0$ in $H^{1}(\Omega)$ follows.


\section{Concentrating integrals}
\label{concentrating}
 
\ \ \ \ In this section we describe some technical results that will be needed in the sequel. Initially, we adapt some results from \cite{arrieta} on concentrating integrals. We note that since $0 < g_0 \leqslant g_\epsilon(\cdot) \leqslant g_1$ in $(0,T)$, uniformly in $\epsilon$, we have that the set $\omega_{\epsilon}$ is contained in a strip of width $\epsilon g_{1}$ on $\partial \Omega$, without oscillatory behavior. 

\newtheorem{lem1}{Lemma}[section]
\begin{lem1}
\label{lem1}
Suppose that $v\in H^{s}(\Omega)$ with $\frac{1}{2}<s\leqslant 1$ and $s-1\geqslant - \frac{1}{q}$. Then, for sufficiently small $\epsilon_{0}$, there exists a constant $C>0$ independent of $\epsilon$ and $v$ such that for any $0<\epsilon\leqslant\epsilon_{0}$, we have
$$
\frac{1}{\epsilon}\int_{\omega_{\epsilon}}\left|v\right|^{q} d\xi\leqslant C \left\|v\right\|^{q}_{H^{s}(\Omega)}.
$$ 
\end{lem1}
\begin{proof}
We note that $\frac{1}{\epsilon}\int_{\omega_{\epsilon}}\left|v\right|^q d\xi \leqslant \frac{1}{\epsilon}\int_{r_{\epsilon}}\left|v\right|^q d\xi,$ where $r_{\epsilon}$ is given by 
\begin{eqnarray}
\label{repsilon}
r_{\epsilon}=\left\{ \xi\in \mathbb{R}^{2} \; : \; \xi= \zeta(s)-\epsilon g_{1}N(\zeta(s)), \quad s\in [0,T] \right\}.
\end{eqnarray}
Thus, the result follows from~\cite[Lemma 2.1]{arrieta}. 
\end{proof}

In the following result, we describe how our concentrating integrals converge to boundary integrals. 

\newtheorem{lem2}[lem1]{Lemma}
\begin{lem2}
\label{lem2}
Suppose that $h, \varphi \in H^{s}(\Omega)$, with $\frac{1}{2}<s\leqslant 1$. Then,
\begin{eqnarray}
\label{limiteexplicito}
\lim_{\epsilon \to 0}\frac{1}{\epsilon}\int_{\omega_{\epsilon}}h \, \varphi \, d\xi=\int_{\partial \Omega} \mu \, \gamma(h)\gamma(\varphi) dS,
\end{eqnarray}
where $\mu \in L^\infty(\partial \Omega)$ is given by \eqref{mu} and $\gamma: H^{s}(\Omega)\mapsto L^{2}(\partial \Omega)$ is the trace operator.
\end{lem2}
\begin{proof}
Initially, let $h$ and $\varphi$ be smooth functions defined in $\bar{\Omega}$. We note that
$$
\frac{1}{\epsilon}\int_{\omega_{\epsilon}} h \, \varphi \, d\xi=\frac{1}{\epsilon}\int^{T}_{0}\int^{\epsilon g_{\epsilon}(s)}_{0}h(\Psi(t,s))\varphi(\Psi(t,s))\left|\det J \Psi(t,s)\right|dtds,
$$
where $\Psi(t,s)=\zeta(s)-tN(\zeta(s))=(x(s)-ty'(s),y(s)+tx'(s))$, for $s\in[0,T]$ and $0\leqslant t<\epsilon g_{\epsilon}(s)$. Now, for $\epsilon$ sufficiently small, we obtain 
$$
\det J \Psi(t,s)=-[(x'(s))^2+(y'(s))^2]+t(x'(s)y''(s)-y'(s)x''(s))=-1+t(x'(s)y''(s)-y'(s)x''(s))<0.
$$ 
Hence,
$$
\frac{1}{\epsilon}\int_{\omega_{\epsilon}}h\varphi d\xi=\frac{1}{\epsilon}\int^{T}_{0}\int^{\epsilon g_{\epsilon}(s)}_{0}h(\Psi(t,s))\varphi(\Psi(t,s))\left[ 1+t(y'(s)x''(s)-x'(s)y''(s)) \right]dtds.
$$

Taking $t=\epsilon g_{\epsilon}(s) \beta$ we have
$$
\frac{1}{\epsilon}\int^{T}_{0}\int^{\epsilon g_{\epsilon}(s)}_{0}h(\Psi(t,s))\varphi(\Psi(t,s))\left[ 1+t(y'(s)x''(s)-x'(s)y''(s)) \right]dtds
$$
$$
=\int^{T}_{0}\int^{1}_{0}h(\Psi(\epsilon g_{\epsilon}(s)\beta,s))\varphi(\Psi(\epsilon g_{\epsilon}(s)\beta,s))\left[ 1+\epsilon g_{\epsilon}(s)\beta(y'(s)x''(s)-x'(s)y''(s)) \right]g_{\epsilon}(s)d\beta ds.
$$
Thus,
$$
\left|\frac{1}{\epsilon}\int_{\omega_{\epsilon}} h \, \varphi \, d\xi-\int_{\partial \Omega} \mu \, h \, \varphi \, dS\right|\leqslant \left|\int^{T}_{0}g_{\epsilon}(s)h(\gamma(s))\varphi(\gamma(s))ds-\int^{T}_{0}\mu(s) \, h(\gamma(s))\varphi(\gamma(s))ds\right|
$$
$$
+ \displaystyle \left|\int^{T}_{0}\int^{1}_{0} g_{\epsilon}(s) \{ h(\Psi(\epsilon g_{\epsilon}(s)\beta,s))\varphi(\Psi(\epsilon g_{\epsilon}(s)\beta,s))\left[ 1+\epsilon g_{\epsilon}(s)\beta(y'(s)x''(s)-x'(s)y''(s)) \right]-h(\gamma(s))\varphi(\gamma(s))\}d\beta ds\right|.
$$

Using the Average Theorem, we can get the following weak convergence for the oscillating functions $g_\epsilon$
$$
g_\epsilon(\cdot)  \to  \mu(\cdot) = \frac{1}{l(\cdot)}\int_0^{l(\cdot)}g(\cdot,\tau) \,d\tau \quad  w^*-L^\infty(0,T),
$$
for more details see \cite[Lemma 2.3]{gam1}.
Consequently
$$
\lim_{\epsilon\to0}\int^{T}_{0}g_{\epsilon}(s)h(\gamma(s))\varphi(\gamma(s))ds=\int^{T}_{0} \mu(s) \, h(\gamma(s))\varphi(\gamma(s))ds.
$$
Moreover, since $\epsilon g_{\epsilon}(s)\beta \to 0$, as $\epsilon\to0$, uniformly for $(\beta,s)\in [0,1]\times[0,T]$, we have
$$
\displaystyle \left|\int^{T}_{0}\int^{1}_{0} g_{\epsilon}(s) \{ h(\Psi(\epsilon g_{\epsilon}(s)\beta,s))\varphi(\Psi(\epsilon g_{\epsilon}(s)\beta,s))\left[ 1+\epsilon g_{\epsilon}(s)\beta(y'(s)x''(s)-x'(s)y''(s)) \right]-h(\gamma(s))\varphi(\gamma(s))\}d\beta ds\right|\to 0, 
$$
as $\epsilon\to0$. Therefore, $\left|\frac{1}{\epsilon}\int_{\omega_{\epsilon}} h\, \varphi \, d\xi -  \int_{\partial \Omega}\mu \, h \, \varphi \, dS\right|\to 0$ as $\epsilon\to0$. Hence, the proof of equality (\ref{limiteexplicito}) follows from density arguments, the continuity of the trace operator $\gamma$ and Lemma~\ref{lem1}.  
\end{proof}

Also, we obtain the following result as a consequence of Lemma \ref{lem1} and \cite[Lemma 2.5]{arrieta}:
\newtheorem{lem3}[lem1]{Lemma}
\begin{lem3}
\label{lem3}
Suppose that the family $V_{\epsilon}$ satisfies (\ref{hip1}) and (\ref{hip2}). Then, for $s>\frac{1}{2}$, $\sigma>\frac{1}{2}$ and $s+\sigma>\frac{3}{2}$, if we define the operators $P_{\epsilon}: H^{s}(\Omega) \mapsto (H^{\sigma}(\Omega))'$ by
$$
\left\langle P_{\epsilon}(u),\varphi\right\rangle=\frac{1}{\epsilon}\int_{\omega_{\epsilon}}V_{\epsilon} \, u \, \varphi \, d\xi, \quad \textrm { for }  \epsilon>0,   \quad \mbox{ and } \quad  \left\langle P_{0}(u),\varphi\right\rangle=\int_{\partial \Omega} V_{0} \, u \, \varphi \, dS,
$$
we have $P_{\epsilon} \to P_{0}$ in $\mathscr{L}(H^{s}(\Omega),(H^{\sigma}(\Omega))')$.
\end{lem3}

%

\section{Abstract setting}
\label{abstract}

\ \ \ \ We initially proceed as \cite{gam1,arrieta,anibal,anibal2} writing the parabolic problems (\ref{1}) and (\ref{2}) in an abstract form. To this aim,  we introduce the continuous bilinear forms $a_\epsilon: H^1(\Omega) \times H^1(\Omega) \mapsto \mathbb{R}$ with $\epsilon \in [0,\epsilon_0]$ for some $\epsilon_0 > 0$ by
\begin{equation} 
\label{CF}
\begin{gathered}
a_\epsilon(u,v) = \int_\Omega \nabla u \, \nabla v \, dxdy + \lambda \int_\Omega u \, v \, dxdy + \frac{1}{\epsilon} \int_{\omega_{\epsilon}} V_\epsilon \, u \,  v \, dxdy, \qquad   0<\epsilon\leqslant \epsilon_0, \\
a_0(u,v) = \int_\Omega \nabla u \, \nabla v \, dxdy + \lambda \int_\Omega u \, v \, dxdy +  \int_{\partial \Omega} V_0 \, u \,  v \, dxdy, \qquad \epsilon = 0,
\end{gathered}
\end{equation}
where the family $V_{\epsilon}$ satisfies (\ref{hip1}) and (\ref{hip2}). Thus, we can define the  linear operators $A_\epsilon: H^1 \subset H^{-1}(\Omega) \mapsto H^{-1}(\Omega)$ by $\left\langle A_\epsilon u,v \right\rangle_{-1,1} = a_\epsilon(u,v)$, for all $v \in H^1(\Omega)$ and $\epsilon \in [0,\epsilon_0]$.

 The operators $A_\epsilon$ can also be considered as going from
 $H^{2-\alpha}(\Omega) \subset H^{-\alpha}(\Omega)$ into $H^{-\alpha}(\Omega)$,
 for $\frac{1}{2} < \alpha \leqslant 1$.  
 Abusing the  notation, we will sometimes denote all these different realizations
 simply by   $A_{\epsilon}$. 

\newtheorem{lam}{Lemma}[section]
\begin{lam}
\label{lam}
There exists $\lambda^* \in \mathbb{R}$, independent of $0 \leqslant \epsilon \leqslant \epsilon_0$, such that the bilinear form $a_\epsilon$ is uniformly coercive in   $H^{1}(\Omega)$ for all $\lambda > \lambda^*$.
 Consequently,  the operators $A_\epsilon$  are continuously invertible from $H^{2-\alpha}(\Omega)$ into $H^{-\alpha}(\Omega)$, for all $\epsilon \in[0, \epsilon_0]$ and $\frac{1}{2} < \alpha \leqslant 1$. 
\end{lam}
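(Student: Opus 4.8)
The plan is to first establish the uniform coercivity of $a_\epsilon$ and then deduce the invertibility of $A_\epsilon$ on the whole scale by a Fredholm argument. Taking $v=u$ in \eqref{CF} gives, for $0<\epsilon\le\epsilon_0$,
\[
a_\epsilon(u,u)=\|\nabla u\|^2_{L^2(\Omega)}+\lambda\,\|u\|^2_{L^2(\Omega)}+\frac{1}{\epsilon}\int_{\omega_\epsilon}V_\epsilon\,u^2\,d\xi,
\]
and the analogous identity with the boundary term $\int_{\partial\Omega}V_0\,u^2\,dS$ when $\epsilon=0$. The only term that can spoil coercivity is the potential term, so the whole point is to bound it, uniformly in $\epsilon$, by a small fraction of $\|u\|^2_{H^1(\Omega)}$ plus a multiple of $\|u\|^2_{L^2(\Omega)}$.

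To do this I would invoke Lemma~\ref{lem3} with $s=\sigma$ chosen in $(\tfrac34,1)$, so that the hypotheses $s>\tfrac12$, $\sigma>\tfrac12$, $s+\sigma>\tfrac32$ hold. The potential term is precisely $\langle P_\epsilon u,u\rangle$, hence
\[
\Big|\tfrac{1}{\epsilon}\int_{\omega_\epsilon}V_\epsilon\,u^2\,d\xi\Big|=|\langle P_\epsilon u,u\rangle|\le\|P_\epsilon\|_{\mathscr{L}(H^s,(H^s)')}\,\|u\|^2_{H^s(\Omega)},
\]
and since $P_\epsilon\to P_0$ in $\mathscr{L}(H^s,(H^s)')$ the norms $\|P_\epsilon\|$ are bounded by a constant $M$ independent of $\epsilon\in[0,\epsilon_0]$ (the case $\epsilon=0$, i.e.\ the boundary term, being covered by $\|P_0\|$). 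Next I would use the interpolation (Ehrling) inequality: since $H^1(\Omega)\hookrightarrow H^s(\Omega)\hookrightarrow L^2(\Omega)$ with the first embedding compact and $\tfrac12<s<1$, for every $\delta>0$ there is $C_\delta>0$ with $\|u\|^2_{H^s(\Omega)}\le\delta\,\|u\|^2_{H^1(\Omega)}+C_\delta\,\|u\|^2_{L^2(\Omega)}$. Combining these and choosing $\delta$ so small that $M\delta\le\tfrac12$, and using $\|u\|^2_{H^1}=\|\nabla u\|^2_{L^2}+\|u\|^2_{L^2}$,
\[
a_\epsilon(u,u)\ge\tfrac12\|\nabla u\|^2_{L^2(\Omega)}+\big(\lambda-\tfrac12-M C_\delta\big)\|u\|^2_{L^2(\Omega)}.
\]
Setting $\lambda^*=1+M C_\delta$, for every $\lambda>\lambda^*$ we obtain $a_\epsilon(u,u)\ge c\,\|u\|^2_{H^1(\Omega)}$ with $c>0$ and $\lambda^*$ both independent of $\epsilon$, which is the asserted uniform coercivity.

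Finally, for the invertibility: when $\alpha=1$, i.e.\ $A_\epsilon:H^1(\Omega)\to H^{-1}(\Omega)$, uniform coercivity together with the boundedness of $a_\epsilon$ gives the conclusion immediately by Lax--Milgram, with a bound on $A_\epsilon^{-1}$ controlled by $1/c$. For general $\tfrac12<\alpha\le1$ I would write $A_\epsilon=(-\Delta+\lambda)+P_\epsilon$, where the Neumann realization of $-\Delta+\lambda$ is (classically, for $\lambda$ large) an isomorphism $H^{2-\alpha}(\Omega)\to H^{-\alpha}(\Omega)$. Applying Lemma~\ref{lem3} with $s=2-\alpha$ and $\sigma=\tfrac12+\eta$ for small $\eta>0$ shows that $P_\epsilon$ maps $H^{2-\alpha}(\Omega)$ boundedly into $H^{-1/2-\eta}(\Omega)$; since $\alpha>\tfrac12$, for $\eta$ small the embedding $H^{-1/2-\eta}(\Omega)\hookrightarrow H^{-\alpha}(\Omega)$ is compact, so $P_\epsilon$ is a compact perturbation and $A_\epsilon$ is Fredholm of index zero on this scale. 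Injectivity is inherited from coercivity, since a kernel element lies in $H^1(\Omega)$ and would satisfy $a_\epsilon(u,u)=0$, forcing $u=0$; hence $A_\epsilon$ is an isomorphism, with bounds uniform in $\epsilon$. The main obstacle is the uniform control of the concentrating potential term and the recognition of its lower-order (compact) character, which is exactly what Lemma~\ref{lem3} supplies.
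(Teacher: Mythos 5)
Your proof is correct, and for the coercivity part it rests on the same basic idea as the paper's: everything reduces to absorbing the concentrated potential term into $\delta\,\|u\|^2_{H^{1}(\Omega)}+C_\delta\,\|u\|^2_{L^{2}(\Omega)}$, uniformly in $\epsilon$. The execution differs, though. The paper splits off only the negative part $(V_\epsilon)_-$, applies Cauchy--Schwarz on $\omega_\epsilon$ together with hypothesis \eqref{hip1} to reduce the potential term to $\bigl(\tfrac1\epsilon\int_{\omega_\epsilon}|\phi|^4\,d\xi\bigr)^{1/2}$, controls that by $\|\phi\|^2_{H^{s}(\Omega)}$ via Lemma~\ref{lem1} with $q=4$ and $s\in[\tfrac34,1)$, and finishes with the interpolation inequality $\|\phi\|_{H^{s}}\leqslant\|\phi\|^{s}_{H^{1}}\|\phi\|^{1-s}_{L^{2}}$ plus Young. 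You instead quote Lemma~\ref{lem3} with $s=\sigma\in(\tfrac34,1)$ to get a uniform operator-norm bound $|\langle P_\epsilon u,u\rangle|\leqslant M\|u\|^2_{H^{s}(\Omega)}$ and then apply Ehrling's inequality; this is slightly cleaner and treats $\epsilon>0$ and $\epsilon=0$ on the same footing, but it also invokes the weak-convergence hypothesis \eqref{hip2} (which Lemma~\ref{lem3} requires), whereas the paper's estimate uses only \eqref{hip1} --- immaterial here, since both hypotheses are in force wherever $a_\epsilon$ is defined. For the second assertion you actually supply more than the paper does: the paper's proof stops at coercivity and leaves the invertibility on the scale $H^{2-\alpha}(\Omega)\to H^{-\alpha}(\Omega)$ as an unproved ``consequently'', while your Lax--Milgram argument for $\alpha=1$ and your compact-perturbation/Fredholm argument for general $\alpha\in(\tfrac12,1]$ (choosing $\eta<\alpha-\tfrac12$ so that $H^{-1/2-\eta}(\Omega)\hookrightarrow H^{-\alpha}(\Omega)$ compactly, with injectivity inherited from coercivity because $H^{2-\alpha}(\Omega)\subset H^{1}(\Omega)$) is a legitimate way to fill that gap.
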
 
\begin{proof}
Let us consider the case $a_\epsilon$ for $\epsilon > 0$. A similar argument gives the result for $a_{0}$.
First, we note that 
\begin{eqnarray}
\label{t1}
a_{\epsilon}(\phi,\phi) \geqslant \left\|\nabla \phi\right\|^{2}_{L^{2}(\Omega)}+\lambda \left\|\phi\right\|^2_{L^{2}(\Omega)} - \frac{1}{\epsilon} \int_{\omega_{\epsilon}}\left(V_{\epsilon}\right)_{-} \, \left|\phi\right|^2 \, d\xi, 
\end{eqnarray}
where $\left(V_{\epsilon}\right)_{-}$ is the negative part of the potential $V_{\epsilon}$ satisfying $V_{\epsilon} = \left(V_{\epsilon}\right)_{+} - \left(V_{\epsilon}\right)_{-}$. For the negative part 
\begin{eqnarray*}
 \frac{1}{\epsilon} \int_{\omega_{\epsilon}} \left(V_{\epsilon}\right)_{-} \, \left|\phi\right|^2 \, d\xi  \leqslant  \left(\frac{1}{\epsilon}\int_{\omega_{\epsilon}}\left|V_{\epsilon}\right|^2 \, d\xi \right)^{\frac{1}{2}} \left(\frac{1}{\epsilon}\int_{\omega_{\epsilon}}\left|\phi\right|^4 \, d\xi \right)^{\frac{1}{2}}  
 \leqslant  C \left(\frac{1}{\epsilon}\int_{\omega_{\epsilon}}\left|\phi\right|^4 d\xi \right)^{\frac{1}{2}}.
\end{eqnarray*}
Taking $\frac{1}{2}<s<1$ and $s-1\geqslant -\frac{1}{4}$, that is, $\frac{3}{4}\leqslant s<1$, and using the Lemma \ref{lem1} with $q=4$, we get 
$$
\frac{1}{\epsilon}\int_{\omega_{\epsilon}}\left(V_{\epsilon}\right)_{-}\left|\phi\right|^2 \, d\xi  \leqslant C\left\|\phi\right\|^{2}_{H^{s}(\Omega)}\leqslant C \left\|\phi\right\|^{2s}_{H^{1}(\Omega)} \left\|\phi\right\|^{2(1-s)}_{L^{2}(\Omega)}.
$$
Due to Young's Inequality, we obtain for any $\delta>0$ 
\begin{eqnarray}
\label{t2}
\frac{1}{\epsilon}\int_{\omega_{\epsilon}}\left(V_{\epsilon}\right)_{-}\left|\phi\right|^2 d\xi  \leqslant \delta \left\|\phi\right\|^{2}_{H^{1}(\Omega)} + C_{\delta}\left\|\phi\right\|^{2}_{L^{2}(\Omega)}.
\end{eqnarray}
Then, it follows from (\ref{t1}) and (\ref{t2}) that
$$
a_{\epsilon}(\phi,\phi) \geqslant  \left( \lambda-(1+C_{\delta})\right)\left\|\phi\right\|^2_{L^{2}(\Omega)}+(1-\delta)\left\|\phi\right\|^{2}_{H^{1}(\Omega)}.
$$ 
Consequently, we can take $\delta>0$ small enough and $\lambda>0$ large enough such that
$$
a_{\epsilon}(\phi,\phi)\geqslant C \left\|\phi\right\|^{2}_{H^{1}(\Omega)}, \qquad \mbox{$\forall$ $\phi\in H^{1}(\Omega)$}.
$$
Hence, the bilinear form $a_{\epsilon}$ is uniformly coercive, and we can take any $\lambda>0$ if $V_{\epsilon}\geqslant0$ in (\ref{CF}).
\end{proof}

\newtheorem{obsabs2}[lam]{Remark}
\begin{obsabs2}
\label{obsabs2}
For each $\epsilon \in [0,\epsilon_0]$,  the linear operator $A_{\epsilon}:H^{2-\alpha}(\Omega) \subset  H^{-\alpha}(\Omega)\mapsto H^{-\alpha}(\Omega)$
 ($ \frac{1}{2} < \alpha \leqslant 1$) is 
a selfadjoint, thus sectorial operator with spectrum contained in the subset $(\lambda,\infty) \subset \mathbb{R}$ for $\lambda > \lambda^* >0$. 
\end{obsabs2}

\newtheorem{obsabs1}[lam]{Remark}
\begin{obsabs1}
\label{obsabs1}
For each $\epsilon \in [0,\epsilon_0]$,  the linear operator $A^{-1}_{\epsilon}:H^{-\alpha}(\Omega) \mapsto H^{2-\alpha}(\Omega)$ is continuous and therefore, compact as an operator from $H^{-\alpha}(\Omega)$ into $H^{2-\beta}(\Omega)$,
 if $\beta > \alpha$. 
\end{obsabs1}

We now define $F_{\epsilon}: H^{1}(\Omega) \mapsto H^{-\alpha}(\Omega)$, with $\frac{1}{2}<\alpha< 1$, by
\begin{eqnarray}
\label{defFe}
\left\langle F_{0}(u),\phi \right\rangle:= \int_{\partial \Omega}\mu \, \gamma\left(f(u)\right)\gamma(\phi) \, dS \qquad \mbox{and} \qquad \left\langle F_{\epsilon}(u),\phi \right\rangle:= \frac{1}{\epsilon}\int_{\omega_{\epsilon}}f(u)\phi \, d\xi, \qquad \mbox{with $0<\epsilon \leqslant \epsilon_{0}$},
\end{eqnarray}
for $u\in H^{1}(\Omega)$ and $\phi\in H^{\alpha}(\Omega)$, where $\gamma$ denotes the trace operator and $\mu$ is the mean value of $g(s,\cdot)$ at $s \in (0,T)$ introduced in \eqref{mu}.

Using the hypothesis (H), we have by Lemma \ref{lem1eq} below that $F_{\epsilon}$ is well defined for each $0\leqslant\epsilon\leqslant\epsilon_{0}$. By results in 
\cite{OP}  the problems (\ref{1}) and (\ref{2}) are ``equivalent'' to the following  abstract form
\begin{eqnarray}
\label{probabstract}
\left\{ \begin{array}{ll}
                    \dot{u}_{\epsilon}(t)+A_{\epsilon}u_\epsilon(t)=F_{\epsilon}\left(u_\epsilon(t)\right), & \qquad t>0\quad \mbox{and} \quad 0\leqslant \epsilon\leqslant \epsilon_{0} \\
                   u_\epsilon(0)=\phi^{\epsilon}.
        \end{array} \right. 
\end{eqnarray}
As previously mentioned, it is known that the parabolic problems \eqref{probabstract} are well posed in $H^{1}(\Omega)$   and, for each $0\leqslant \epsilon \leqslant \epsilon_{0}$, they determine a nonlinear semigroup 
$
T^{\epsilon}(t)\phi=u_\epsilon(t,\phi),
$
for $t>0$ and $\phi \in H^{1}(\Omega)$, associated to the equations (\ref{1}) and (\ref{2}).
Moreover, under assumption (\ref{dissipative}), the problems \eqref{probabstract} have a global attractor $\mathscr{A}_{\epsilon}$ uniformly bounded in $L^{\infty}(\Omega)$ (see \cite{alexandreat,OP}).

We now  obtain some estimates for the family of operators $\{ A_\epsilon \}_{\epsilon \in [0,\epsilon_0]}$.

\newtheorem{lem4}[lam]{Lemma}
\begin{lem4}
\label{lem4}
If $ \frac{1}{2} < \alpha \leqslant 1$ then there exists a $K(\epsilon) \geqslant 0$, $K(\epsilon) \to 0$ as $\epsilon \to 0$, such that 
$$
\| \left( A_\epsilon - A_0 \right) \, u \|_{H^{-\alpha}(\Omega)} \leqslant K(\epsilon) \, \| A_0 u \|_{H^{-\alpha}(\Omega)}, \qquad \textrm{for all } u \in H^{2-\alpha}(\Omega).
$$
\end{lem4}
\begin{proof}
This estimate is a direct consequence of Lemmas \ref{lem3} and \ref{lam}. Indeed, by Lemma \ref{lem3}, we have 
\begin{eqnarray*}
\left\langle \left( A_\epsilon - A_0 \right) u,\varphi \right\rangle_{-\alpha,\alpha} & = & \frac{1}{\epsilon}\int_{\omega_{\epsilon}}V_{\epsilon} \, u \, \varphi \, d\xi - 
\int_{\partial \Omega} V_{0} \, u \, \varphi \, dS \\
& = & \left\langle \left( P_{\epsilon} - P_{0} \right) (u),\varphi\right\rangle_{-\alpha,\alpha} 
 \leqslant  \| P_{\epsilon} - P_{0} \|_{\mathscr{L}(H^{2-\alpha},H^{-\alpha})} \, \|u\|_{H^{2-\alpha}(\Omega)} \, \|\varphi\|_{H^{\alpha}(\Omega)},
\end{eqnarray*}
with $\| P_{\epsilon} - P_{0} \|_{\mathscr{L}(H^{2-\alpha},H^{-\alpha})} \to 0$ as $\epsilon \to 0$. Since $\|u\|_{H^{2-\alpha}(\Omega)} \leqslant C \|A_0u\|_{H^{-\alpha}(\Omega)} $ by Lemma \ref{lam}, the result follows.
\end{proof}

Now we get a convergence result for the linear semigroup $e^{-tA_\epsilon}$ as $\epsilon$ goes to zero.

\newtheorem{linear}[lam]{Proposition}
\begin{linear}
\label{linear}
If $ \frac{1}{2} < \alpha \leqslant 1$ then the family of linear semigroups $e^{ -tA_{\epsilon}}$ satisfies
\begin{gather*}
\| A^{\beta}_{0} \left( e^{-tA_{\epsilon}}-  e^{-tA_0} \right) \|  \leqslant
C(\epsilon)
\frac{1}{ t^{\beta}} e^{-bt}, \qquad  0 \leqslant \beta \leqslant 1
\end{gather*}
for $t >0$,  where  $b \in \mathbb{R}$ can be chosen as close to  $\lambda > \lambda^*$ as needed  and $C(\epsilon) \to 0$  as $\epsilon \to 0$.
\end{linear}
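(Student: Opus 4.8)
The plan is to represent the difference of the two analytic semigroups by a Dunford (Cauchy) contour integral and to exploit the smallness of the perturbation $A_\epsilon-A_0$ supplied by Lemmas \ref{lem3} and \ref{lem4}. First I would record the \emph{uniform sectoriality} of the family $\{A_\epsilon\}$: since each $A_\epsilon$ is selfadjoint with spectrum in $(\lambda,\infty)$, $\lambda>\lambda^*>0$ (Remark \ref{obsabs2}), and the coercivity constant of Lemma \ref{lam} is independent of $\epsilon$, the operators are uniformly sectorial. Hence there exist a contour $\Gamma$ (the boundary of a sector with vertex at some $a\in(0,\lambda)$, opening to the right so that $\mathrm{Re}\,\zeta\ge b$ on $\Gamma$ for a $b$ as close to $\lambda$ as we wish) and a constant $M$, both independent of $\epsilon$, with $\|(\zeta-A_\epsilon)^{-1}\|\le M/|\zeta-a|$ for $\zeta\in\Gamma$ and all $\epsilon\in[0,\epsilon_0]$. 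I will also use, uniformly in $\epsilon$, the standard sectorial estimate $\|A_0^\beta(\zeta-A_0)^{-1}\|\le C|\zeta-a|^{\beta-1}$ and the equivalence between the fractional power scales of $A_\epsilon$ and the Sobolev scale. All norms are taken in $\mathscr{L}(H^{-\alpha}(\Omega))$.

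Starting from the representation of $e^{-tA_\epsilon}-e^{-tA_0}$ as a contour integral of the resolvent difference, I would apply the resolvent identity in the form $(\zeta-A_\epsilon)^{-1}-(\zeta-A_0)^{-1}=(\zeta-A_0)^{-1}(A_\epsilon-A_0)(\zeta-A_\epsilon)^{-1}$, placing $(\zeta-A_0)^{-1}$ on the left so that, after multiplying by $A_0^\beta$,
\[
A_0^\beta\big(e^{-tA_\epsilon}-e^{-tA_0}\big)=\frac{1}{2\pi i}\int_\Gamma e^{-t\zeta}\,A_0^\beta(\zeta-A_0)^{-1}\,(A_\epsilon-A_0)\,(\zeta-A_\epsilon)^{-1}\,d\zeta,
\]
the factor $A_0^\beta(\zeta-A_0)^{-1}$ being a clean sectorial expression bounded by $C|\zeta-a|^{\beta-1}$. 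The perturbation is controlled by Lemma \ref{lem4}, which gives $A_\epsilon-A_0=D_\epsilon A_0$ with $\|D_\epsilon\|\le K(\epsilon)\to0$, so that $\|(A_\epsilon-A_0)(\zeta-A_\epsilon)^{-1}\|\le K(\epsilon)\,\|A_0(\zeta-A_\epsilon)^{-1}\|\le C\,K(\epsilon)$ uniformly on $\Gamma$, using that $A_0A_\epsilon^{-1}=(I+D_\epsilon)^{-1}$ and $\zeta(\zeta-A_\epsilon)^{-1}-I$ are bounded there.

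Combining these bounds, the integrand is dominated by $C\,K(\epsilon)\,|e^{-t\zeta}|\,|\zeta-a|^{\beta-1}$, and the elementary contour estimate
\[
\int_\Gamma|e^{-t\zeta}|\,|\zeta-a|^{\beta-1}\,|d\zeta|\le C\,t^{-\beta}e^{-bt},
\]
valid for $\beta>0$ and obtained by parametrizing the rays of $\Gamma$ and reducing to a Gamma integral, yields exactly $\|A_0^\beta(e^{-tA_\epsilon}-e^{-tA_0})\|\le C(\epsilon)\,t^{-\beta}e^{-bt}$ with $C(\epsilon)=C\,K(\epsilon)\to0$, where $b$ may be taken as close to $\lambda$ as desired by moving the vertex $a$.

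The delicate point, and what I expect to be the main obstacle, is twofold. The first is the uniformity in $\epsilon$ of all sectorial constants and of the fractional-power norm equivalences; this has to be traced carefully back to the $\epsilon$-independent coercivity of Lemma \ref{lam} and the selfadjointness of Remark \ref{obsabs2}. The second, sharper, difficulty is the endpoint $\beta=0$: there the estimate above degenerates into the logarithmically divergent integral $\int_\Gamma|e^{-t\zeta}|\,|\zeta-a|^{-1}|d\zeta|$ as $t\to0^+$, which is too weak. To close this case I would replace the $A_0$-boundedness of Lemma \ref{lem4} by the genuine mapping property of Lemma \ref{lem3}: since $A_\epsilon-A_0=P_\epsilon-P_0$ is bounded from $H^{1}(\Omega)$ into $H^{-\alpha}(\Omega)$ with norm tending to $0$, one has $\|(A_\epsilon-A_0)(\zeta-A_\epsilon)^{-1}\|\le C(\epsilon)\,\|(\zeta-A_\epsilon)^{-1}\|_{\mathscr{L}(H^{-\alpha},H^{1})}\le C(\epsilon)\,|\zeta-a|^{(\alpha-1)/2}$, gaining the extra factor $|\zeta-a|^{(\alpha-1)/2}$ in $\zeta$. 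This renders the contour integral convergent without logarithm for $\alpha<1$ and restores the bound $C(\epsilon)e^{-bt}$; the remaining borderline value $\alpha=1$ is then recovered by the same device with a slightly stronger smoothing exponent, or by interpolation in $\beta$.
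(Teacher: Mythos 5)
Your argument is correct, but it takes a genuinely different route from the paper, which does not prove this estimate at all: its proof consists of checking that $D(A_\epsilon)=D(A_0)=H^{2-\alpha}(\Omega)$ and that Lemma \ref{lem4} holds, and then invoking the abstract perturbation result \cite[Theorem 3.3]{antoniomarcone}. What you have written is, in effect, a reconstruction of the proof of that cited theorem: the Dunford integral, the resolvent identity with $(\zeta-A_0)^{-1}$ placed on the left, the factorization $A_\epsilon-A_0=D_\epsilon A_0$ with $\|D_\epsilon\|\leqslant K(\epsilon)\to 0$ supplied by Lemma \ref{lem4}, and uniform sectorial bounds traced back to the $\epsilon$-independent coercivity of Lemma \ref{lam}. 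The two difficulties you flag are real and are precisely what the abstract theorem must handle: uniformity in $\epsilon$ of the sectorial constants (which follows either from selfadjointness together with the uniform spectral bound of Remark \ref{obsabs2}, or directly from Lemma \ref{lem4} once $K(\epsilon)<1$), and the endpoint $\beta=0$, where the crude uniform bound $CK(\epsilon)$ on $(A_\epsilon-A_0)(\zeta-A_\epsilon)^{-1}$ only yields a logarithmic factor as $t\to0^{+}$. Your repair --- replacing the $A_0$-relative bound by the sharper mapping property $\|P_\epsilon-P_0\|_{\mathscr{L}(H^{s},H^{-\alpha})}\to0$ from Lemma \ref{lem3} with $s<2-\alpha$, which buys a decay factor $|\zeta-a|^{-\delta}$, $\delta>0$, along the contour --- is the right one, and it also covers $\alpha=1$ since Lemma \ref{lem3} allows any $s>\frac12$ there; the alternative you mention, ``interpolation in $\beta$,'' would not reach the endpoint $\beta=0$ and should be dropped. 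What the paper's approach buys is brevity; what yours buys is a self-contained argument that makes explicit where each of Lemmas \ref{lam}, \ref{lem3} and \ref{lem4} enters.
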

\begin{proof}
Since $D(A_\epsilon) = D(A_0) = H^{2-\alpha}(\Omega)$ and the family of operators $\{ A_\epsilon \}_{\epsilon \in [0,\epsilon_0]}$ satisfies Lemma \ref{lem4}, we obtain the result as a directly consequence of \cite[Theorem 3.3]{antoniomarcone}.
\end{proof}

Next we study the behavior of the maps $F_{\epsilon}$ defined in (\ref{defFe}).

\newtheorem{lem1eq}[lam]{Lemma}
\begin{lem1eq}
\label{lem1eq}
Suppose that (H) holds and $ \frac{1}{2} < \alpha < 1$. Then:
\begin{enumerate}

\item There exists $k>0$ independent of $\epsilon$ such that
$$
\left\|F_{\epsilon}(u)\right\|_{H^{-\alpha}(\Omega)}\leqslant k, \qquad \mbox{$\forall$ $u\in H^{1}(\Omega)$} \quad \mbox{and}\quad 0\leqslant\epsilon\leqslant\epsilon_{0}. 
$$

\item For each $0\leqslant\epsilon\leqslant \epsilon_{0}$, the map $F_{\epsilon}:H^{1}(\Omega)\mapsto H^{-\alpha}(\Omega)$ is globally Lipschitz, uniformly in $\epsilon$.

\item For each $u\in H^{1}(\Omega)$, we have
$$
\left\|F_{\epsilon}(u)-F_{0}(u)\right\|_{H^{-\alpha}(\Omega)}\to 0, \qquad \mbox{as $\epsilon\to0$}.
$$
Furthermore, this limit is uniform for $u$ in bounded set of $H^{1}(\Omega)$.

\item If $u_{\epsilon}\to u$ in $H^{1}(\Omega)$, as $\epsilon\to 0$, then
$$
\left\|F_{\epsilon}(u_{\epsilon})-F_{0}(u)\right\|_{H^{-\alpha}(\Omega)}\to 0, \qquad \mbox{as $\epsilon\to0$}.
$$
\end{enumerate}
\end{lem1eq}
\begin{proof}
$1.$ For each $u\in H^{1}(\Omega)$ and $0\leqslant\epsilon\leqslant\epsilon_{0}$, we have
$$
\left\|F_{\epsilon}(u)\right\|_{H^{-\alpha}(\Omega)}=\displaystyle \sup_{\left\|\phi\right\|_{H^{\alpha}(\Omega)}=1} 
\left|\left\langle F_{\epsilon}(u),\phi \right\rangle\right|.
$$

Using (H) and the Lemma~\ref{lem1}, we have that for each $0<\epsilon\leqslant\epsilon_{0}$ and $\phi\in H^{\alpha}(\Omega)$, 
$$
\left|\left\langle F_{\epsilon}(u),\phi \right\rangle\right| \leqslant  \left(\frac{1}{\epsilon}\int_{\omega_{\epsilon}}\left|f(u(x))\right|^2 dx\right)^{\frac{1}{2}} \left(\frac{1}{\epsilon}\int_{\omega_{\epsilon}}\left|\phi(x)\right|^{2}dx\right)^{\frac{1}{2}} \leqslant CK \left\|\phi\right\|_{H^{\alpha}(\Omega)}.
$$
We note that $C$ does not depend of $\epsilon$, because the set $\omega_{\epsilon}$ has Lebesgue measure $\left|\omega_{\epsilon}\right|\leqslant \left|r_{\epsilon}\right|$, where $r_{\epsilon}$ is given by (\ref{repsilon}) e $\left|r_{\epsilon}\right|=O(\epsilon)$. Hence, there exists a constant $k>0$ independent of $\epsilon$ such that
$$
\left\|F_{\epsilon}(u)\right\|_{H^{-\alpha}(\Omega)}\leqslant k, \qquad \mbox{$\forall$ } 0<\epsilon\leqslant\epsilon_{0}. 
$$ 

Now, using (H) and the continuity of the trace operator $\gamma: H^{\alpha}(\Omega) \mapsto L^{2}(\partial \Omega)$, we get
$$
\left|\left\langle F_{0}(u),\phi \right\rangle\right| \leqslant  \|\mu\|_{L^\infty(\partial \Omega)}\left(\int_{\partial \Omega}\left|\gamma(f(u(x)))\right|^2 dx\right)^{\frac{1}{2}} \left(\int_{\partial \Omega}\left|\gamma(\phi(x))\right|^{2}dx\right)^{\frac{1}{2}} \leqslant K \left\|\gamma(\phi)\right\|_{L^{2}(\partial \Omega)} \leqslant c \, K \left\|\phi\right\|_{H^{\alpha}(\Omega)}.
$$
Hence, there exists $k>0$ such that
$$
\left\|F_{0}(u)\right\|_{H^{-\alpha}(\Omega)}\leqslant k. 
$$ 

\noindent $2.$ Let $u,v\in H^{1}(\Omega)$ and $0\leqslant\epsilon\leqslant\epsilon_{0}$, we have
$$
\left\|F_{\epsilon}(u)-F_{\epsilon}(v)\right\|_{H^{-\alpha}(\Omega)}=\displaystyle \sup_{\left\|\phi\right\|_{H^{\alpha}(\Omega)}=1} 
\left|\left\langle F_{\epsilon}(u)-F_{\epsilon}(v),\phi \right\rangle\right|.
$$

For each $0<\epsilon\leqslant\epsilon_{0}$ and $\phi\in H^{\alpha}(\Omega)$, from Lemma~\ref{lem1} we have
$$
\begin{array}{rcl}
\displaystyle \left|\left\langle F_{\epsilon}(u)-F_{\epsilon}(v),\phi \right\rangle\right| &\leqslant & \displaystyle \left(\frac{1}{\epsilon}\int_{\omega_{\epsilon}}\left|f(u(x))-f(v(x))\right|^{2}dx\right)^{\frac{1}{2}} \left(\frac{1}{\epsilon}\int_{\omega_{\epsilon}}\left|\phi(x)\right|^{2}dx\right)^{\frac{1}{2}}\\
&\leqslant & \displaystyle C \left(\frac{1}{\epsilon}\int_{\omega_{\epsilon}} \left|f(u(x))-f(v(x))\right|^{2} dx\right)^{\frac{1}{2}}\left\|\phi\right\|_{H^{\alpha}(\Omega)}.
\end{array}
$$
Using (H) and the Lemma~\ref{lem1}, we have 
$$
\begin{array}{rcl}
\displaystyle \left\|F_{\epsilon}(u)-F_{\epsilon}(v)\right\|_{H^{-\alpha}(\Omega)} & \leqslant & \displaystyle C \left(\frac{1}{\epsilon}\int_{\omega_{\epsilon}} \left|f'(\theta(x)u(x)+(1-\theta(x))v(x))\right|^{2} \left|u(x)-v(x)\right|^{2} dx\right)^{\frac{1}{2}} \\
& \leqslant & \displaystyle CK \left(\frac{1}{\epsilon}\int_{\omega_{\epsilon}}\left|u(x)-v(x)\right|^{2} dx\right)^{\frac{1}{2}}\leqslant \displaystyle CK \left\|u-v\right\|_{H^{1}(\Omega)},
\end{array}
$$
for some $0\leqslant \theta(x)\leqslant 1$, $x\in \bar{\Omega}$. Hence, there exists $L>0$ independent of $\epsilon$ such that
$$
\left\|F_{\epsilon}(u)-F_{\epsilon}(v)\right\|_{H^{-\alpha}(\Omega)}\leqslant L \left\|u-v\right\|_{H^{1}(\Omega)}.
$$
Therefore, for each $0<\epsilon\leqslant\epsilon_{0}$, $F_{\epsilon}$ is globally Lipschitz, uniformly in $\epsilon$. Similarly, $F_{0}$ is globally Lipschitz.

\vspace{0.4cm}

\noindent $3.$ Initially, we take $\alpha_{0}$ satisfying $\frac{1}{2}< \alpha_{0} < 1$. For each $u\in H^{1}(\Omega)$ and $\phi\in H^{\alpha_{0}}(\Omega)$, we have
$$
\left| \left\langle F_{\epsilon}(u),\phi\right\rangle-\left\langle F_{0}(u),\phi\right\rangle \right|=\left|\frac{1}{\epsilon}\int_{\omega_{\epsilon}}f(u(x))\phi(x)dx-\int_{\partial \Omega} \mu \,  \gamma\left(f(u(x))\right)\gamma\left(\phi(x)\right)dx\right|.
$$
From Lemma~\ref{lem2}, we get that for each $\phi\in H^{\alpha_{0}}(\Omega)$,
\begin{eqnarray}
\label{convunifor1}
\left\langle F_{\epsilon}(u),\phi\right\rangle \to \left\langle F_{0}(u),\phi\right\rangle, \qquad \mbox{as $\epsilon\to0$.}
\end{eqnarray}

Moreover, fixing $u\in H^{1}(\Omega)$ and using the item 1, we have that the set $\{ F_{\epsilon}(u)\in H^{-\alpha_{0}}(\Omega) \; : \; \epsilon\in(0, \epsilon_{0}] \}$ is equicontinuous. Thus, the limit (\ref{convunifor1}) is uniform for $\phi$ in compact sets of $H^{\alpha_{0}}(\Omega)$. Hence, choosing $\alpha_{0}$ such that $\frac{1}{2}<\alpha_{0}<\alpha <1$, we have that the embedding $H^{\alpha}(\Omega)\hookrightarrow H^{\alpha_{0}}(\Omega)$ is compact, and then, in particular,  
\begin{eqnarray}
\label{convpont1}
\left\|F_{\epsilon}(u)-F_{0}(u)\right\|_{H^{-\alpha}(\Omega)}=
\sup_{\left\|\phi\right\|_{H^{\alpha}(\Omega)}=1}
\left| \left\langle F_{\epsilon}(u)-F_{0}(u),\phi\right\rangle \right| \to 0,\qquad \mbox{as $\epsilon\to0$}. 
\end{eqnarray}

Now, we will show that the limit (\ref{convpont1}) is uniform for $u\in H^{1}(\Omega)$, $\left\|u\right\|_{H^{1}(\Omega)}\leqslant R$ for some $R>0$. Initially, we show that $F_{\epsilon}$ is continuous in $H^{1}(\Omega)$ space with the weak topology.
Let $u_{n}\rightharpoonup u_{0}$ in $H^{1}(\Omega)$, as $n\to \infty$. Since $H^{1}(\Omega)\hookrightarrow H^{s}(\Omega)$ with compact embedding, for $s<1$, we have
$$
u_{n} \to u_{0}\quad \mbox{in}\quad H^{s}(\Omega), \quad \mbox{as $n\to\infty$}.
$$
For each $\phi\in H^{\alpha}(\Omega)$, it follows from Lemma~\ref{lem1} that
$$
\begin{array}{rcl}
\displaystyle \left|\left\langle F_{\epsilon}(u_{n})-F_{\epsilon}(u_{0}),\phi \right\rangle\right| &\leqslant & \displaystyle \left(\frac{1}{\epsilon}\int_{\omega_{\epsilon}}\left|f(u_{n}(x))-f(u_{0}(x))\right|^{2}dx\right)^{\frac{1}{2}} \left(\frac{1}{\epsilon}\int_{\omega_{\epsilon}}\left|\phi(x)\right|^{2}dx\right)^{\frac{1}{2}}\\
&\leqslant & \displaystyle C \left(\frac{1}{\epsilon}\int_{\omega_{\epsilon}} \left|f(u_{n}(x))-f(u_{0}(x))\right|^{2} dx\right)^{\frac{1}{2}}\left\|\phi\right\|_{H^{\alpha}(\Omega)}.
\end{array}
$$
Using (H) and the Lemma~\ref{lem1}  with $\frac{1}{2}<s<1$, we have for some $0\leqslant \theta(x)\leqslant 1$, $x\in \bar{\Omega}$, that
$$
\begin{array}{rcl}
\displaystyle \left\|F_{\epsilon}(u_{n})-F_{\epsilon}(u_{0})\right\|_{H^{-\alpha}(\Omega)} & \leqslant & \displaystyle C \left(\frac{1}{\epsilon}\int_{\omega_{\epsilon}} \left|f'(\theta(x)u_{n}(x)+(1-\theta(x))u_{0}(x))\right|^{2} \left|u_{n}(x)-u_{0}(x)\right|^{2} dx\right)^{\frac{1}{2}} \\
& \leqslant & \displaystyle CK \left(\frac{1}{\epsilon}\int_{\omega_{\epsilon}}\left|u_{n}(x)-u_{0}(x)\right|^{2} dx\right)^{\frac{1}{2}}\leqslant \displaystyle CK \left\|u_{n}-u_{0}\right\|_{H^{s}(\Omega)}\to0,\qquad \mbox{as $n\to\infty$}.
\end{array}
$$

Therefore, for each $0<\epsilon\leqslant\epsilon_{0}$, $F_{\epsilon}:H^{1}(\Omega)\mapsto H^{-\alpha}(\Omega)$ is continuous in $H^{1}(\Omega)$ with the weak topology. Hence, $F_{\epsilon}$ is uniformly continuous in compact sets of $H^{1}(\Omega)$ with the weak topology. We note that the closed ball $\bar{B}_{R}(0)=\{u\in H^{1}(\Omega) \; : \; \left\|u\right\|_{H^{1}(\Omega)}\leqslant R\}$, with $R>0$, is compact in $H^{1}(\Omega)$ with the weak topology. From this and (\ref{convpont1}), we get that the limit (\ref{convpont1}) is uniform in $\bar{B}_{R}(0)$.

\vspace{0.4cm}

\noindent $4.$  This item follows from 2. and 3. adding and subtracting $F_{\epsilon}(u)$.
\end{proof}

To obtain the lower semicontinuity of attractors, we also need to analyze the linearized problems. Hence, it is necessary to study the properties of the differential of $F_{\epsilon}$. 

\newtheorem{not1eq}[lam]{Lemma}
\begin{not1eq}
\label{not1eq}   
Suppose that (H) holds and $ \frac{1}{2} < \alpha < 1$. Then, for each $0\leqslant\epsilon\leqslant \epsilon_{0}$, $F_{\epsilon}:H^{1}(\Omega) \mapsto H^{-\alpha}(\Omega)$ is Fr\'echet differentiable, uniformly in $\epsilon$, with Fr\'echet differential given by 
$$
\begin{array}{lll}
F'_{\epsilon}:H^{1}(\Omega) \mapsto \mathscr{L}\left(H^{1}(\Omega),H^{-\alpha}(\Omega)\right) \\
\qquad \quad \ \ u^{*} \rightarrow F'_{\epsilon}(u^{*}): H^{1}(\Omega) \mapsto  H^{-\alpha}(\Omega)\\
\qquad \qquad \qquad \qquad \qquad \qquad  w  \rightarrow F'_{\epsilon}(u^{*})w 
\end{array}
$$
where $\mathscr{L}\left(H^{1}(\Omega),H^{-\alpha}(\Omega)\right)$ denotes the space of the continuous linear operators from $H^{1}(\Omega)$ in $H^{-\alpha}(\Omega)$, and
$$
\label{derivadas}
\begin{gathered}
\left\langle F'_{\epsilon}(u^{*})w,\phi\right\rangle := \displaystyle\frac{1}{\epsilon}\int_{\omega_{\epsilon}}f'\left(u^{*}\right)w\phi \, d\xi, \qquad \mbox{$\forall$ $\phi\in H^{\alpha}(\Omega)$}\quad \mbox{and}\quad 0<\epsilon\leqslant \epsilon_{0}, \\
\left\langle F'_{0}(u^{*})w,\phi\right\rangle := \displaystyle\int_{\partial \Omega}\mu \, \gamma\left(f'\left(u^{*}\right)w\right)\gamma\left(\phi\right) \, dS, \qquad \mbox{$\forall$ $\phi\in H^{\alpha}(\Omega)$,}
\end{gathered}
$$
where $\gamma$ denotes the trace operator and $\mu$ is the mean value given by \eqref{mu}. 
\end{not1eq}
\begin{proof}
From (H), in particular, we have that $f\in C^{2}(\mathbb{R})$, hence $f'(v)\in \mathscr{L}(\mathbb{R})$, for each $v\in \mathbb{R}$. Using this and the linearity of integral and of trace operator, we get that for each $0\leqslant \epsilon\leqslant \epsilon_{0}$, $F'_{\epsilon}(u^{*})\in \mathscr{L}\left(H^{1}(\Omega),H^{-\alpha}(\Omega)\right)$, for each $u^{*}\in H^{1}(\Omega)$. 

Now, we will show that given $\eta>0$, there exists $\delta>0$ independent of $\epsilon$ such that 
$$
\left\|F_{\epsilon}(u^{*}+w)-F_{\epsilon}(u^{*})-F'_{\epsilon}(u^{*})w\right\|_{H^{-\alpha}(\Omega)}\leqslant \eta \left\|w\right\|_{H^{1}(\Omega)}, \qquad \mbox{$\forall$ $w\in H^{1}(\Omega)$}\quad \mbox{with}\quad \left\|w\right\|_{H^{1}(\Omega)}\leqslant \delta.
$$

In fact, for each $0<\epsilon\leqslant\epsilon_{0}$, $w\in H^{1}(\Omega)$ and $\phi\in H^{\alpha}(\Omega)$, from Lemma~\ref{lem1} we have
\begin{eqnarray*}
& &\left|\left\langle F_{\epsilon}(u^{*}+w)-F_{\epsilon}(u^{*})-F'_{\epsilon}(u^{*})w,\phi \right\rangle\right| \\
\qquad &\leqslant & \displaystyle \left(\frac{1}{\epsilon}\int_{\omega_{\epsilon}}\left|f(u^{*}(x)+w(x))-f(u^{*}(x))-f'(u^{*}(x))w(x)\right|^{2}dx\right)^{\frac{1}{2}} \left(\frac{1}{\epsilon}\int_{\omega_{\epsilon}}\left|\phi(x)\right|^{2}dx\right)^{\frac{1}{2}}\\
\qquad &\leqslant & \displaystyle C \left(\frac{1}{\epsilon}\int_{\omega_{\epsilon}} \left|f(u^{*}(x)+w(x))-f(u^{*}(x))-f'(u^{*}(x))w(x)\right|^{2} dx\right)^{\frac{1}{2}}\left\|\phi\right\|_{H^{\alpha}(\Omega)}.
\end{eqnarray*}
Using (H) and the Lemma~\ref{lem1}, we have
\begin{eqnarray*}
& & \left\|F_{\epsilon}(u^{*}+w)-F_{\epsilon}(u^{*})-F'_{\epsilon}(u^{*})w\right\|_{H^{-\alpha}(\Omega)} \\
& \leqslant & \displaystyle C \left(\frac{1}{\epsilon}\int_{\omega_{\epsilon}} \left|f'(u^{*}(x)+\theta(x) w(x))-f'(u^{*}(x))\right|^{2} \left|w(x)\right|^{2} dx\right)^{\frac{1}{2}} \\
& \leqslant & \displaystyle C \left[\left(\frac{1}{\epsilon}\int_{\omega_{\epsilon}} \left|f'(u^{*}(x)+\theta(x) w(x))-f'(u^{*}(x))\right|^{4} dx\right)^{\frac{1}{2}}\left(\frac{1}{\epsilon}\int_{\omega_{\epsilon}}\left|w(x)\right|^{4}dx \right)^{\frac{1}{2}}\right]^{\frac{1}{2}}\\
& \leqslant & \displaystyle C \left[\left(\frac{1}{\epsilon}\int_{\omega_{\epsilon}} \left|f''\left[ s(x)(u^{*}(x)+\theta(x) w(x))+(1-s(x))u^{*}(x) \right]\right|^{4} \left|\theta(x)w(x)\right|^{4} dx\right)^{\frac{1}{2}}\left\|w\right\|^{2}_{H^{1}(\Omega)}\right]^{\frac{1}{2}}\\
& \leqslant & \displaystyle C K \left(\frac{1}{\epsilon}\int_{\omega_{\epsilon}} \left|w(x)\right|^{4} dx\right)^{\frac{1}{4}}\left\|w\right\|_{H^{1}(\Omega)} \leqslant \displaystyle C K \left\|w\right\|_{H^{1}(\Omega)} \left\|w\right\|_{H^{1}(\Omega)}, 
\end{eqnarray*}
for some $0\leqslant \theta(x)\leqslant 1$ and $0\leqslant s(x)\leqslant 1$, $x\in \bar{\Omega}$.

Therefore, given $\eta>0$, taking $\delta=\frac{\eta}{CK}>0$ we get that for $\left\|w\right\|_{H^{1}(\Omega)}\leqslant \delta$, 
$$
\left\|F_{\epsilon}(u^{*}+w)-F_{\epsilon}(u^{*})-F'_{\epsilon}(u^{*})w\right\|_{H^{-\alpha}(\Omega)}\leqslant \eta \left\|w\right\|_{H^{1}(\Omega)}.
$$
We note that $\delta$ does not depend of $\epsilon$. Hence, for each $0<\epsilon\leqslant \epsilon_{0}$, $F_{\epsilon}$ is Fr\'echet differentiable, uniformly in $\epsilon$. Similarly, $F_{0}$ is also Fr\'echet differentiable. 
\end{proof}

Similarly,  we can prove the following lemma:

\newtheorem{lem2eq}[lam]{Lemma}
\begin{lem2eq}
\label{lem2eq}
Suppose that (H) holds and $ \frac{1}{2} < \alpha < 1$. Then:
\begin{enumerate}
\item There exists $k>0$ independent of $\epsilon$ such that
$$
\left\|F'_{\epsilon}(u^{*})\right\|_{\mathscr{L}(H^{1},H^{-\alpha})}\leqslant k, \qquad \mbox{$\forall$ $u^{*}\in H^{1}(\Omega)$} \quad \mbox{and}\quad 0\leqslant\epsilon\leqslant\epsilon_{0}. 
$$

\item For each $0\leqslant\epsilon\leqslant \epsilon_{0}$, the map $F'_{\epsilon}:H^{1}(\Omega) \mapsto \mathscr{L}(H^{1}(\Omega),H^{-\alpha}(\Omega))$ is globally Lipschitz, uniformly in $\epsilon$.

\item For each $u^{*}\in H^{1}(\Omega)$, we have
$$
\left\|F'_{\epsilon}(u^{*})-F'_{0}(u^{*})\right\|_{\mathscr{L}(H^{1},H^{-\alpha})}\to 0, \qquad \mbox{as $\epsilon\to0$}.
$$

\item If $u^{*}_{\epsilon}\to u^{*}$ in $H^{1}(\Omega)$, as $\epsilon\to 0$, then
$$
\left\|F'_{\epsilon}(u^{*}_{\epsilon})-F'_{0}(u^{*})\right\|_{\mathscr{L}(H^{1},H^{-\alpha})}\to 0, \qquad \mbox{as $\epsilon\to0$}.
$$

\item If $u^{*}_{\epsilon}\to u^{*}$ in $H^{1}(\Omega)$, as $\epsilon\to 0$, and $w_{\epsilon}\to w$ in $H^{1}(\Omega)$, as $\epsilon\to 0$, then
$$
\left\|F'_{\epsilon}(u^{*}_{\epsilon})w_{\epsilon}-F'_{0}(u^{*})w\right\|_{H^{-\alpha}(\Omega)}\to 0, \qquad \mbox{as $\epsilon\to0$}.
$$
\end{enumerate}
\end{lem2eq}

\section{Upper semicontinuity of attractors and continuity of equilibria} 
\label{continuityeq}

\ \ \ \ The \emph{upper} semicontinuity of the family of attractors 
 $\{\mathscr{A}_{\epsilon}\}_{\epsilon\in[0,\epsilon_{0}]}$  of  (\ref{1}) and (\ref{2}) is easily obtained using the results of \cite{antoniomarcone}. 

\newtheorem{upperattrac}{Proposition}[section]
\begin{upperattrac}
\label{upperattrac}
Suppose that (H) holds.  Then there exists $\epsilon_{0}>0$ such that:
\begin{enumerate}
\item The problems (\ref{1}) and (\ref{2}) have a global attractor $\mathscr{A}_{\epsilon}$ in $H^{1}(\Omega)$ for each $0\leqslant\epsilon\leqslant\epsilon_{0}$. Moreover, there exists $R>0$ independent of $\epsilon$ such that
$$ 
\sup_{\epsilon\in[0,\epsilon_{0}]}\sup_{u\in \mathscr{A}_{\epsilon}}\left\|u\right\|_{H^{1}(\Omega)\cap L^{\infty}(\Omega)}\leqslant R.
$$
In particular, $\mathscr{A}_{0}$ attracts $\bigcup_{\epsilon\in (0,\epsilon_{0}]}\mathscr{A}_{\epsilon}$ in $H^{1}(\Omega)$.

\item Let $0<\tau<\infty$ and $B\subset H^{1}(\Omega)$ be a bounded set. For each $0\leqslant\epsilon\leqslant\epsilon_{0}$, let $\phi^{\epsilon}\in H^{1}(\Omega)$ such that $\phi^{\epsilon}\to\phi^{0}$ in $H^{1}(\Omega)$, as $\epsilon\to0$, with $\phi^{0}\in B$. Then, there exist $M(\tau)>0$ and a function $C(\epsilon)\geqslant0$, with $C(\epsilon)\to 0$ as $\epsilon\to0$, such that
$$
\left\|T^{\epsilon}(t)\phi^{\epsilon}-T^{0}(t)\phi^{0}\right\|_{H^{1}(\Omega)}\leqslant M(\tau)C(\epsilon)t^{-\gamma}, \qquad \mbox{for $t\in(0,\tau]$,}
$$
for some $\gamma\in (0,1)$.
\item The family of global attractors of (\ref{1}) and (\ref{2}), $\{\mathscr{A}_{\epsilon}\}_{\epsilon\in[0,\epsilon_{0}]}$, is upper semicontinuous at $\epsilon=0$
 in $H^{1}(\Omega)$:
$$
\sup_{u_{\epsilon}\in \mathscr{A}_{\epsilon}} \inf_{u_{0}\in \mathscr{A}_{0}} \{\left\|u_{\epsilon}-u_{0}\right\|_{H^{1}(\Omega)}\}\to 0, \qquad \mbox{as $\epsilon \to0$}.
$$
\end{enumerate}
\end{upperattrac}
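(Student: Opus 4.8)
The plan is to read the coupled family \eqref{probabstract} as an abstract semilinear problem $\dot u_\epsilon + A_\epsilon u_\epsilon = F_\epsilon(u_\epsilon)$ on the fractional power scale generated by the sectorial operators $A_\epsilon$, and to extract all three assertions from the convergence of the linear semigroups (Proposition \ref{linear}) together with the uniform bounds and convergence of the nonlinearities (Lemma \ref{lem1eq}), in the spirit of the abstract continuity theory of \cite{antoniomarcone}. Throughout I identify the phase space with a fractional power domain: since each $A_\epsilon$ is selfadjoint and positive (Remark \ref{obsabs2}) with $D(A_\epsilon)=H^{2-\alpha}(\Omega)$, one has $H^{1}(\Omega)=D(A_0^{\beta})$ with $\beta=(1+\alpha)/2\in(\tfrac34,1)$, so the $H^1$-norm is controlled by applying $A_0^{\beta}$ to the variation-of-constants formula. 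For \textbf{Part 1}, the existence of each $\mathscr{A}_\epsilon$ and its uniform $L^\infty$-bound are already guaranteed by \cite{alexandreat,OP} under \eqref{dissipative} and (H). For the uniform $H^1$-radius I would start from
\[
T^\epsilon(t)\phi = e^{-tA_\epsilon}\phi + \int_0^t e^{-(t-s)A_\epsilon} F_\epsilon\bigl(T^\epsilon(s)\phi\bigr)\,ds,
\]
apply $A_0^{\beta}$, and use the uniform sectorial estimate for $e^{-tA_\epsilon}$ (Remark \ref{obsabs2}) together with the $\epsilon$-independent bound $\|F_\epsilon(u)\|_{H^{-\alpha}(\Omega)}\leqslant k$ of Lemma \ref{lem1eq}(1); since $t^{-\beta}e^{-bt}$ is integrable near $0$, this yields a bound on $\|T^\epsilon(t)\phi\|_{H^1(\Omega)}$ uniform in $\epsilon$ on a bounded set that absorbs all orbits, hence the common radius $R$. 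As $\bigcup_{\epsilon}\mathscr{A}_\epsilon$ then lies in a fixed $H^1$-ball and $\mathscr{A}_0$ attracts bounded sets, it attracts this union.

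\textbf{Part 2} is the technical heart. Writing
\begin{align*}
T^\epsilon(t)\phi^\epsilon - T^0(t)\phi^0
&= \bigl(e^{-tA_\epsilon}-e^{-tA_0}\bigr)\phi^0 + e^{-tA_\epsilon}\bigl(\phi^\epsilon-\phi^0\bigr) \\
&\quad + \int_0^t \Bigl[ e^{-(t-s)A_\epsilon} F_\epsilon\bigl(T^\epsilon(s)\phi^\epsilon\bigr) - e^{-(t-s)A_0} F_0\bigl(T^0(s)\phi^0\bigr) \Bigr]\,ds,
\end{align*}
I would measure the $H^1$-norm by applying $A_0^{\beta}$ and estimate the three groups of terms. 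The pure linear gap $\bigl(e^{-tA_\epsilon}-e^{-tA_0}\bigr)\phi^0$ is bounded by Proposition \ref{linear} (with this $\beta$), producing the factor $C(\epsilon)\,t^{-\gamma}$ with $\gamma=\beta\in(0,1)$, while the term involving $\phi^\epsilon-\phi^0$ tends to $0$ since $\phi^\epsilon\to\phi^0$ in $H^1(\Omega)$ and the $e^{-tA_\epsilon}$ are uniformly bounded on $H^1(\Omega)$. Inside the integral I add and subtract $e^{-(t-s)A_\epsilon}F_\epsilon\bigl(T^0(s)\phi^0\bigr)$ and $e^{-(t-s)A_\epsilon}F_0\bigl(T^0(s)\phi^0\bigr)$ to split the integrand into: a semigroup-gap piece controlled by Proposition \ref{linear} and the uniform bound on $F_0$ (giving $C(\epsilon)$ without singularity); a nonlinearity-gap piece $\|F_\epsilon(v)-F_0(v)\|_{H^{-\alpha}(\Omega)}\to0$ uniformly on bounded sets by Lemma \ref{lem1eq}(3); and a Lipschitz piece bounded through $L\,\|T^\epsilon(s)\phi^\epsilon-T^0(s)\phi^0\|_{H^1(\Omega)}$ from Lemma \ref{lem1eq}(2). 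The last piece reproduces the unknown under an integral with kernel $(t-s)^{-\gamma}$, so the estimate is closed by a singular Gronwall inequality, giving the stated bound $M(\tau)C(\epsilon)t^{-\gamma}$ on $(0,\tau]$. This is precisely the abstract convergence estimate of \cite{antoniomarcone}, which I would invoke or reproduce.

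\textbf{Part 3} follows from Parts 1 and 2. First I would note that $\bigcup_{\epsilon}\mathscr{A}_\epsilon$ is precompact in $H^1(\Omega)$: taking $\beta'\in(\beta,1)$ in the argument of Part 1 shows that for $t>0$ each $T^\epsilon(t)$ maps the absorbing ball into a bounded subset of $H^{2\beta'-\alpha}(\Omega)$, uniformly in $\epsilon$, and this space embeds compactly into $H^1(\Omega)$; invariance then places all attractors in a fixed compact set. Arguing by contradiction, suppose there are $\delta>0$, $\epsilon_n\to0$ and $u_n\in\mathscr{A}_{\epsilon_n}$ with $\operatorname{dist}_{H^1}(u_n,\mathscr{A}_0)\geqslant\delta$. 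By precompactness $u_n\to u^\ast$ in $H^1(\Omega)$ along a subsequence, so $\operatorname{dist}_{H^1}(u^\ast,\mathscr{A}_0)\geqslant\delta$. Using invariance to write $u_n=T^{\epsilon_n}(k)w_n^{(k)}$ with $w_n^{(k)}\in\mathscr{A}_{\epsilon_n}$, and extracting (via precompactness and a diagonal procedure) $H^1$-convergent backward data, Part 2 yields $T^{\epsilon_n}(k)w_n^{(k)}\to T^0(k)w^{(k)}$; this produces a complete $T^0$-orbit through $u^\ast$ bounded in $H^1(\Omega)$, whence $u^\ast\in\mathscr{A}_0$ — a contradiction. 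The main obstacle is Part 2: assembling the linear gap (Proposition \ref{linear}), the nonlinear gap (Lemma \ref{lem1eq}) and the feedback term into a single estimate that is uniform in $\epsilon$ and integrable in time requires a careful singular Gronwall argument with all constants independent of $\epsilon$ on $(0,\tau]$.
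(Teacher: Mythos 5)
Your proposal is correct and follows essentially the same route as the paper: the authors simply verify the hypotheses (Lemma \ref{lem4}, Proposition \ref{linear}, Lemma \ref{lem1eq}) and invoke \cite[Theorem 3.9]{antoniomarcone}, whose proof is precisely the variation-of-constants decomposition plus singular Gronwall argument you sketch. You have merely unpacked the details that the paper delegates to that reference.
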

\begin{proof}
It follows from Lemma \ref{lem4}, Proposition \ref{linear}
 and  \cite[Theorem 3.9]{antoniomarcone} (in the last reference, thought not
 explicitly stated,  the upper semicontinuity was proved in the phase space  $X^{\alpha}$).
\end{proof}

 For the \emph{lower} semicontinuity of the attractors, we  need to consider the set of   equilibria of the parabolic problem (\ref{probabstract}), which is the abstract version of (\ref{1}) and (\ref{2}). The equilibrium solutions of (\ref{1}) and (\ref{2}) are  the solutions of the respective abstract elliptic problems 
\begin{eqnarray}
\label{3e}
& & A_{\epsilon}u_{\epsilon}=F_{\epsilon}(u_{\epsilon}), \qquad 0<\epsilon\leqslant \epsilon_{0} \\
\label{4e}
& & A_{0}u_{0}=F_{0}(u_{0}), \qquad \epsilon=0.
\end{eqnarray}

Define $G_{\epsilon}:  H^{2-\alpha}(\Omega) \mapsto H^{-\alpha}(\Omega)$, with $\frac{1}{2}<\alpha< 1$, by
\begin{equation}
\label{defGe}
 G_{\epsilon}(u)    : =  A_{\epsilon} u -  F_{\epsilon}(u).  
\end{equation}
It follows from Lemma~\ref{not1eq} that $G_{\epsilon}$ is Fr\'echet differentiable.

The set of solutions of (\ref{3e}) and (\ref{4e}) is then given by 
$$
\mathscr{E}_{\epsilon}= \left\{ u\in H^{2-\alpha}(\Omega)   \; : \; 
G_{\epsilon}(u) = A_{\epsilon}u-F_{\epsilon}(u)=0 \right\}, \qquad \epsilon \in [0,\epsilon_0].
$$

Due to the gradient structure of the flow generated by (\ref{probabstract}), its  attractor is the unstable manifold of the  set $\mathscr{E}_{\epsilon}$   (see \cite{haleat}, for details). In particular, we must have $\mathscr{E}_{\epsilon} \neq \emptyset$.  Also, it follows from the regularization properties of the elliptic operator $A_{\epsilon}$ that $\mathscr{E}_{\epsilon}$ is a compact subset of  $ H^{1}(\Omega)$.

The \emph{upper} semicontinuity of the family of equilibria $\left\{\mathscr{E}_{\epsilon}\right\}_{\epsilon\in[0,\epsilon_{0}]}$ at $\epsilon=0$ in $H^1(\Omega)$ is a direct consequence of  Proposition~\ref{upperattrac}.
Indeed, if $\{ u_\epsilon \}_{\epsilon\in[0,\epsilon_{0}]}$  is a family of equilibria, we can extract a convergent subsequence in $H^1(\Omega)$ by item 3 of Proposition \ref{upperattrac}. Thus, using item 2 of Proposition \ref{upperattrac}, we can conclude that the limit function belongs to the set of equilibria $\mathscr{E}_{0}$. Hence we have:

\newtheorem{upperequil}[upperattrac]{Theorem}
\begin{upperequil}
\label{upperequil}
Suppose that (H) holds. Then, the family of equilibria $\{\mathscr{E}_{\epsilon}\}_{\epsilon\in [0,\epsilon_{0}]}$ is upper semicontinuous in $H^1(\Omega)$, at $\epsilon=0$.
\end{upperequil}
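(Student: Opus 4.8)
The plan is to argue by contradiction, reducing the set-convergence statement to the behaviour of a single sequence of equilibria and then identifying its limit as an equilibrium of the limit problem via the semigroup estimate of Proposition~\ref{upperattrac}. Suppose upper semicontinuity fails. Then there exist $\delta>0$, a sequence $\epsilon_n\to 0$, and equilibria $u_{\epsilon_n}\in\mathscr{E}_{\epsilon_n}$ with $\inf_{v\in\mathscr{E}_0}\|u_{\epsilon_n}-v\|_{H^1(\Omega)}\geqslant\delta$ for all $n$.

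The first step is to extract a convergent subsequence. Each equilibrium is a fixed point of the corresponding semigroup, hence $u_{\epsilon_n}\in\mathscr{A}_{\epsilon_n}$, so by item~1 of Proposition~\ref{upperattrac} the sequence is uniformly bounded in $H^1(\Omega)$ by $R$. By the upper semicontinuity of the attractors (item~3 of Proposition~\ref{upperattrac}), $\operatorname{dist}_{H^1(\Omega)}(u_{\epsilon_n},\mathscr{A}_0)\to 0$; choosing $v_n\in\mathscr{A}_0$ with $\|u_{\epsilon_n}-v_n\|_{H^1(\Omega)}\to 0$ and using the compactness of $\mathscr{A}_0$ in $H^1(\Omega)$, I pass to a subsequence (not relabelled) along which $v_n\to u_0\in\mathscr{A}_0$, and therefore $u_{\epsilon_n}\to u_0$ in $H^1(\Omega)$.

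The second step is to check that $u_0\in\mathscr{E}_0$. Since $u_{\epsilon_n}$ is an equilibrium, $T^{\epsilon_n}(t)u_{\epsilon_n}=u_{\epsilon_n}$ for every $t>0$. Applying item~2 of Proposition~\ref{upperattrac} with $\phi^{\epsilon_n}=u_{\epsilon_n}\to u_0=\phi^0$ (legitimate, since the $u_{\epsilon_n}$ lie in a fixed bounded set), I obtain, for each fixed $t\in(0,\tau]$,
\[
\|u_{\epsilon_n}-T^0(t)u_0\|_{H^1(\Omega)}=\|T^{\epsilon_n}(t)u_{\epsilon_n}-T^0(t)u_0\|_{H^1(\Omega)}\leqslant M(\tau)\,C(\epsilon_n)\,t^{-\gamma}\longrightarrow 0.
\]
Letting $n\to\infty$ and using $u_{\epsilon_n}\to u_0$ gives $T^0(t)u_0=u_0$ for every $t>0$, so $u_0$ is a fixed point of the limit semigroup, i.e. $u_0\in\mathscr{E}_0$. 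Then $\inf_{v\in\mathscr{E}_0}\|u_{\epsilon_n}-v\|_{H^1(\Omega)}\leqslant\|u_{\epsilon_n}-u_0\|_{H^1(\Omega)}\to 0$, contradicting the choice of $\delta$, and the theorem follows.

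I expect the only delicate point to be the identification of the limit as an element of $\mathscr{E}_0$. A direct passage to the limit in the stationary equation $A_{\epsilon_n}u_{\epsilon_n}=F_{\epsilon_n}(u_{\epsilon_n})$ would require controlling the $\epsilon$-dependent operators on $H^{2-\alpha}(\Omega)$ simultaneously (via Lemma~\ref{lem4} and item~4 of Lemma~\ref{lem1eq}) together with extra regularity and convergence of $u_{\epsilon_n}$ in $H^{2-\alpha}(\Omega)$; routing the argument through the semigroup estimate of item~2 avoids these complications, since that estimate already encodes the convergences $A_{\epsilon}\to A_0$ and $F_\epsilon\to F_0$ at the level of the flow. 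The remaining ingredients, namely the uniform bound and the upper semicontinuity of the attractors together with the compactness of $\mathscr{A}_0$, are quoted directly from Proposition~\ref{upperattrac}.
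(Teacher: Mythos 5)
Your proof is correct and follows essentially the same route as the paper: the paper's (very brief) argument also extracts a convergent subsequence of equilibria via item~3 of Proposition~\ref{upperattrac} together with the compactness of $\mathscr{A}_0$, and then identifies the limit as an element of $\mathscr{E}_0$ using the semigroup convergence of item~2. You have merely written out in full the details that the paper leaves as a two-sentence sketch.
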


To get the lower semicontinuity of the family of equilibria, we assume an additional assumption.

\newtheorem{def3eq}[upperattrac]{Definition}
\begin{def3eq}
\label{def3eq}
We say that the solution $u^{*}_{\epsilon}$ of (\ref{3e}) and (\ref{4e}) is hyperbolic if zero does not belong to the spectrum set of the operator $A_{\epsilon}-F'_{\epsilon}(u^{*}_{\epsilon})$, that is, if $0\notin\sigma(A_{\epsilon}-F'_{\epsilon}(u^{*}_{\epsilon}))$. 
\end{def3eq}

\newtheorem{teo2eq}[upperattrac]{Proposition}
\begin{teo2eq}
\label{teo2eq}
Suppose that (H) holds. If all points in $\mathscr{E}_{0}$ are isolated, then there is only a finite number of them. Moreover, if $u^{*}_{0}$ is a hyperbolic solution of (\ref{4e}), then $u^{*}_{0}$ is isolated.
\end{teo2eq}
\begin{proof}
Since $\mathscr{E}_{0}$ is compact, we only need to prove that hyperbolic equilibria are isolated. Now observe that   $u^{*}_{0}$ is a hyperbolic solution of (\ref{4e}) if and only if it is a regular point of  the function 
 $G_{\epsilon}$, defined by $(\ref{defGe})$. Since   $G_{\epsilon}$ is Fr\'echet differentiable, the result follows 
 from the Inverse  Function Theorem.
\end{proof}
\newtheorem{teo4eq}[upperattrac]{Theorem}
\begin{teo4eq}
\label{teo4eq}
Suppose that (H) holds and that $u^{*}_{0}$ is a hyperbolic solution of
 (\ref{4e}).   Then, there exist $\epsilon_{0}>0$ and $\delta>0$ such that, for each $0<\epsilon\leqslant \epsilon_{0}$, the equation (\ref{3e}) has exactly one solution, $u^{*}_{\epsilon}$, in 
$\{v\in H^{1}(\Omega) \; : \;  \left\|v-u^{*}_{0}\right\|_{H^{1}(\Omega)}\leqslant \delta\}$.  Furthermore, 
$$
u^{*}_{\epsilon}\to u^{*}_{0} \quad \mbox{in $H^{1}(\Omega)$}, \qquad \mbox{as $\epsilon\to0$}.
$$
In particular, the family of equilibria $\left\{\mathscr{E}_{\epsilon}\right\}_{\epsilon\in[0,\epsilon_{0}]}$ is lower semicontinuous at $\epsilon=0$ in $H^{1}(\Omega)$.
\end{teo4eq}
\begin{proof}
Consider the function
  $G:[-\epsilon_0, \epsilon_0] \times  H^{2-\alpha}(\Omega) \mapsto H^{-\alpha}(\Omega)$, with $\frac{1}{2}<\alpha< 1$, by
$$
\begin{array}{rr}
G(\epsilon,u) := G_{\epsilon}(u) \quad \mbox{\rm if } \epsilon >0,  \\
 := G_{0}(u) \quad \mbox{\rm if } \epsilon \leqslant 0.  
\end{array}
$$
Since $G$ is Fr\'echet differentiable in $u$ and continuous in $\epsilon$, the result follows from the  Implicit Function Theorem (see~\cite[Theorem 9.3-Chapter 4]{loomis}).
\end{proof}

\newtheorem{teo5eq}[upperattrac]{Theorem}
\begin{teo5eq}
\label{teo5eq}
Suppose that (H) holds. If all solutions $u^{*}_{0}$ of (\ref{4e})
 are hyperbolic, then (\ref{4e}) has a finite number $m$ of solutions, $u^{*}_{0,1},...,u^{*}_{0,m}$, and there exists $\epsilon_{0}>0$ such that, for each $0<\epsilon\leqslant \epsilon_{0}$, the equation (\ref{3e}) has exactly $m$ solutions, $u^{*}_{\epsilon,1},...,u^{*}_{\epsilon,m}$. Moreover, for all $i=1,...,m$,
$$
u^{*}_{\epsilon,i}\to u^{*}_{0,i}\quad \mbox{in}\quad H^{1}(\Omega), \qquad \mbox{as $\epsilon\to0$}.
$$
\end{teo5eq}
\begin{proof} 
The proof follows from Proposition~\ref{teo2eq} and Theorem~\ref{teo4eq}. 
\end{proof}

\section{Lower semicontinuity of attractors}
\label{manifolds}

\ \ \ \ We are now in a position to prove our main result, the lower semicontinuity of attractors of the parabolic problem (\ref{probabstract}). This will follow from the continuity of the local unstable manifolds and the gradient structure of the flow.

We already know that if all equilibrium points of (\ref{2}) are hyperbolic, then there is only finite number of them, that is, $\mathscr{E}_{0}=\{u^{*}_{0,1},...,u^{*}_{0,m}\}$, and there exists $\epsilon_{0}>0$ such that, for each $0<\epsilon\leqslant \epsilon_{0}$, the set of equilibria of (\ref{1}) has exactly $m$ elements, say $\mathscr{E}_{\epsilon}=\{u^{*}_{\epsilon,1},...,u^{*}_{\epsilon,m}\}$, and $u^{*}_{\epsilon,i}\to u^{*}_{0,i}$ in $H^{1}(\Omega)$, as $\epsilon\to0$, for $i=1,...,m$, by Theorem~\ref{teo5eq}. For each $u^{*}_{\epsilon,i}\in \mathscr{E}_{\epsilon}$, with $\epsilon\in[0,\epsilon_{0}]$ and $i=1,...,m$, we define its unstable manifold
$$
\begin{array}{lll} W^{u}(u^{*}_{\epsilon,i})=\left\{\eta\in H^{1}(\Omega) \; : \; \mbox{there is a global solution $\xi:\mathbb{R} \mapsto H^{1}(\Omega)$ of (\ref{probabstract}) with $\xi(0)=\eta$}\right.\\
\qquad \qquad \qquad  \left.\mbox{such that $\xi(t)\to u^{*}_{\epsilon,i}$ in $H^{1}(\Omega)$, as $t\to -\infty$} \right\}, \end{array}
$$
and its $\delta$-local unstable manifolds as
$$
\begin{array}{lll} W^{u}_{\delta}(u^{*}_{\epsilon,i})=\left\{\eta\in B_{\delta}(u^{*}_{\epsilon,i})\subset H^{1}(\Omega) \; : \; \mbox{there is a global solution $\xi:\mathbb{R} \mapsto H^{1}(\Omega)$ of (\ref{probabstract}) with $\xi(0)=\eta$}\right.\\
\qquad \qquad \qquad \left. \mbox{such that $\xi(t)\in B_{\delta}(u^{*}_{\epsilon,i})$, $\forall$ $t\leqslant0$, and $\xi(t)\to u^{*}_{\epsilon,i}$ in $H^{1}(\Omega)$, as $t\to -\infty$}\right\}. \end{array}
$$
For further properties of local unstable manifolds, see~\cite{haleat}.

We will show that the local unstable manifolds of $u^{*}_{\epsilon,i}$, for each $i=1,...,m$ fixed, behave continuously with $\epsilon$ in $H^{1}(\Omega)$, using~\cite[Theorem 5.2]{antoniomarcone}, where abstract results on continuity of attractors were obtained. First, we need the following result:

\newtheorem{lem1var}{Lemma}[section]
\begin{lem1var}
\label{lem1var}
Suppose that (H) holds and let $\{u^{*}_{\epsilon}\}_{\epsilon\in(0,\epsilon_{0}]}\in H^{1}(\Omega)$ be a sequence of equilibria of (\ref{probabstract}) such that $u^{*}_{\epsilon}\to u^{*}_{0}$ in $H^{1}(\Omega)$, as $\epsilon\to0$, where $u^{*}_{0}\in H^{1}(\Omega)$ is an equilibrium point of (\ref{probabstract}) with $\epsilon=0$. 
Define the function $F:H^{1}(\Omega)\times [0,\epsilon_{0}]  \mapsto H^{-\alpha}(\Omega)$, with $\frac{1}{2}<\alpha<1$, by $F(u,\epsilon):=F_{\epsilon}(u)$, for $u\in H^{1}(\Omega)$ and $\epsilon\in[0,\epsilon_{0}]$, where $F_{\epsilon}$ is given in (\ref{defFe}). %
Then, $F$ is continuous, $F_\epsilon$ is $C^{1}$, the partial derivative $F_{u}(\cdot,\epsilon)$ is  continuous at $(u,0)$ for all $u\in H^{1}(\Omega)$, and $F(u^{*}_{\epsilon}+w,\epsilon)=A_{\epsilon}u^{*}_{\epsilon}+F_{u}(u^{*}_{\epsilon},\epsilon)w+r(w,\epsilon)$, for all $\epsilon\in[0,\epsilon_{0}]$. Also, for some $\rho > 0$
$$
\sup_{\left\|w\right\|_{H^{1}(\Omega)}\leqslant \rho} \left\|r(w,\epsilon)-r(w,0)\right\|_{H^{-\alpha}(\Omega)}\leqslant C(\epsilon),  \qquad \mbox{with $C(\epsilon)\to 0$ as $\epsilon\to0$,}
$$
$\left\|r(w_1,\epsilon)-r(w_2,\epsilon)\right\|_{H^{-\alpha}(\Omega)}\leqslant k(\rho)\left\|w_1-w_2\right\|_{H^{1}(\Omega)}$, for $\left\|w_{1}\right\|_{H^{1}(\Omega)}\leqslant \rho$, $\left\|w_{2}\right\|_{H^{1}(\Omega)}\leqslant \rho$, $k(\rho)\to0$ as $\rho\to0$, and $k(\cdot)$ is nondecreasing.
\end{lem1var}
\begin{proof}
From Lemma~\ref{lem1eq} we have that $F$ is continuous at $(u,0)$ for all $u\in H^{1}(\Omega)$. Also, the continuity of $F$ at $(u,\epsilon)$ for $\epsilon \neq0$ and $u\in H^{1}(\Omega)$ is immediate. 

Using Lemma~\ref{not1eq} we get that $F$ is Fr\'echet differentiable in the first variable $u$. Now, from Lemma~\ref{lem2eq} we have that the partial derivative $F_{u}$ is continuous at $(u,0)$ for $u\in H^{1}(\Omega)$. Moreover, since $F_{\epsilon}:H^{1}(\Omega) \mapsto H^{-\alpha}(\Omega)$ is Fr\'echet differentiable, uniformly in $\epsilon$, we have
$$
F_{\epsilon}(u^{*}_{\epsilon}+w)-F_{\epsilon}(u^{*}_{\epsilon})=F'_{\epsilon}(u^{*}_{\epsilon})w+r(w,\epsilon),
$$ 
with $r(0,\epsilon)=0$. Now, $u^{*}_{\epsilon}$ is an equilibrium of (\ref{probabstract}), that is, $A_{\epsilon}u^{*}_{\epsilon}=F_{\epsilon}(u^{*}_{\epsilon})$, hence
$$
F_{\epsilon}(u^{*}_{\epsilon}+w)=A_{\epsilon}u^{*}_{\epsilon}+F'_{\epsilon}(u^{*}_{\epsilon})w+r(w,\epsilon).
$$ 
For each $w\in H^{1}(\Omega)$ we have
\begin{eqnarray*}
&&\left\|r(w,\epsilon)-r(w,0)\right\|_{H^{-\alpha}(\Omega)} \\
&\leqslant& \left\|F_{\epsilon}(u^{*}_{\epsilon}+w)-F_{0}(u^{*}_{0}+w)\right\|_{H^{-\alpha}(\Omega)}+\left\|F_{\epsilon}(u^{*}_{\epsilon})-F_{0}(u^{*}_{0})\right\|_{H^{-\alpha}(\Omega)}+\left\|F'_{\epsilon}(u^{*}_{\epsilon})w-F'_{0}(u^{*}_{0})w\right\|_{H^{-\alpha}(\Omega)}.
\end{eqnarray*}

From Lemma~\ref{lem1eq} we have
$$
\left\|F_{\epsilon}(u^{*}_{\epsilon})-F_{0}(u^{*}_{0})\right\|_{H^{-\alpha}(\Omega)}\to 0, \qquad \mbox{as $\epsilon\to0$}.
$$
Moreover, due to Lemma~\ref{lem1eq}, we have 
$$
\begin{array}{rcl}
\left\|F_{\epsilon}(u^{*}_{\epsilon}+w)-F_{0}(u^{*}_{0}+w)\right\|_{H^{-\alpha}(\Omega)}&\leqslant& \left\|F_{\epsilon}(u^{*}_{\epsilon}+w)-F_{\epsilon}(u^{*}_{0}+w)\right\|_{H^{-\alpha}(\Omega)}+\left\|F_{\epsilon}(u^{*}_{0}+w)-F_{0}(u^{*}_{0}+w)\right\|_{H^{-\alpha}(\Omega)}\\
&\leqslant& L\left\|u^{*}_{\epsilon}-u^{*}_{0}\right\|_{H^{1}(\Omega)}+\left\|F_{\epsilon}(u^{*}_{0}+w)-F_{0}(u^{*}_{0}+w)\right\|_{H^{-\alpha}(\Omega)}\to 0,\qquad \mbox{as $\epsilon\to0$},
\end{array}
$$
uniformly for $w\in H^{1}(\Omega)$ such that $\left\|w\right\|_{H^{1}(\Omega)}\leqslant \rho$ for any $\rho >0$. Now, from Lemma~\ref{lem2eq} we have
$$
\left\|F'_{\epsilon}(u^{*}_{\epsilon})w-F'_{0}(u^{*}_{0})w\right\|_{H^{-\alpha}(\Omega)}\leqslant \left\|F'_{\epsilon}(u^{*}_{\epsilon})-F'_{0}(u^{*}_{0})\right\|_{\mathscr{L}(H^{1}(\Omega),H^{-\alpha}(\Omega))}\left\|w\right\|_{H^{1}(\Omega)}\to0,\qquad \mbox{as $\epsilon\to0$}.
$$
Therefore, $\sup_{\left\|w\right\|_{H^{1}(\Omega)}\leqslant \rho} \left\|r(w,\epsilon)-r(w,0)\right\|_{H^{-\alpha}(\Omega)}\leqslant C(\epsilon)$, with $C(\epsilon)\to0$ as $\epsilon\to0$.

Finally, let $w_1,w_2\in H^{1}(\Omega)$ such that $\left\|w_{1}\right\|_{H^{1}(\Omega)}\leqslant \rho$ and $\left\|w_{2}\right\|_{H^{1}(\Omega)}\leqslant \rho$. Using the Mean Value's Inequality and Lemma~\ref{lem2eq}, we have that there exist $0\leqslant s\leqslant 1$ and $L>0$ independent of $\epsilon$ such that
$$
\begin{array}{rcl}
\left\|r(w_1,\epsilon)-r(w_2,\epsilon)\right\|_{H^{-\alpha}(\Omega)}&=&\left\|F_{\epsilon}(u^{*}_{\epsilon}+w_1)-F_{\epsilon}(u^{*}_{\epsilon}+w_2)-F'_{\epsilon}(u^{*}_{\epsilon})w_1+F'_{\epsilon}(u^{*}_{\epsilon})w_2\right\|_{H^{-\alpha}(\Omega)}\\
&\leqslant& \left\|F'_{\epsilon}(s(u^{*}_{\epsilon}+w_1)+(1-s)(u^{*}_{\epsilon}+w_2))(w_{1}-w_{2})-F'_{\epsilon}(u^{*}_{\epsilon})(w_1-w_2)\right\|_{H^{-\alpha}(\Omega)}\\
&\leqslant& \left\|F'_{\epsilon}(u^{*}_{\epsilon}+sw_1+(1-s)w_2)-F'_{\epsilon}(u^{*}_{\epsilon})\right\|_{\mathscr{L}(H^{1}(\Omega),H^{-\alpha}(\Omega))}\left\|w_{1}-w_{2}\right\|_{H^{1}(\Omega)}\\
&\leqslant& L\left\|sw_1+(1-s)w_2\right\|_{H^{1}(\Omega)}\left\|w_{1}-w_{2}\right\|_{H^{1}(\Omega)}\leqslant 2L\rho \left\|w_{1}-w_{2}\right\|_{H^{1}(\Omega)}.
\end{array}
$$    
Taking $K(\rho)=2L\rho$ we obtain the results. 
\end{proof}

From Lemma~\ref{lem1var} and using~\cite[Theorem 5.2]{antoniomarcone}, we obtain the continuity of the local unstable manifolds near an equilibrium of (\ref{probabstract}). More precisely, we have:

\newtheorem{contvar}[lem1var]{Proposition}
\begin{contvar}
\label{contvar}
Suppose that (H) holds and that $u^{*}_{0}$ is a hyperbolic equilibrium point of (\ref{probabstract}) with $\epsilon=0$. By Theorem~\ref{teo4eq}, there exists $\epsilon_{0}>0$ such that (\ref{probabstract}) has an unique equilibrium $u^{*}_{\epsilon}\in H^{1}(\Omega)$ in a small neighborhood of $u^{*}_{0}$, for all $0<\epsilon\leqslant \epsilon_{0}$, with $u^{*}_{\epsilon}\to u^{*}_{0}$ in $H^{1}(\Omega)$, as $\epsilon\to0$. Then, there exists $\delta>0$ such that 
$$
\sup_{u_{\epsilon}\in W^{u}_{\delta}(u^{*}_{\epsilon})}\inf_{u_{0}\in W^{u}_{\delta}(u^{*}_{0})} \left\|u_{\epsilon}-u_{0}\right\|_{H^{1}(\Omega)}+ \sup_{u_{0}\in W^{u}_{\delta}(u^{*}_{0})}\inf_{u_{\epsilon}\in W^{u}_{\delta}(u^{*}_{\epsilon})} \left\|u_{\epsilon}-u_{0}\right\|_{H^{1}(\Omega)}\to 0,\qquad \mbox{as $\epsilon\to0$}.
$$
\end{contvar}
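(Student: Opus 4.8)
The plan is to deduce the statement from the abstract continuity result \cite[Theorem 5.2]{antoniomarcone}, whose hypotheses have essentially been assembled in the preceding lemmas. The first step is to rewrite the flow near the equilibria in a common framework: for each $\epsilon \in [0,\epsilon_0]$ I translate the equilibrium $u^{*}_{\epsilon}$ to the origin by setting $w = u - u^{*}_{\epsilon}$, so that \eqref{probabstract} becomes
$$
\dot{w} + B_{\epsilon} w = r(w,\epsilon), \qquad B_{\epsilon} := A_{\epsilon} - F'_{\epsilon}(u^{*}_{\epsilon}),
$$
where the remainder $r(\cdot,\epsilon)$ is exactly the one produced in Lemma \ref{lem1var}. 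In these coordinates the local unstable manifold $W^{u}_{\delta}(u^{*}_{\epsilon})$ is represented, via the Lyapunov--Perron method, as the graph of a Lipschitz map from the unstable subspace of $-B_{\epsilon}$ into its complement, and the asserted continuity of the manifolds reduces to continuity, as $\epsilon \to 0$, of these graphs.

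The second step is to check, one by one, the ingredients required by \cite[Theorem 5.2]{antoniomarcone}. The convergence of the linear semigroups $e^{-tA_{\epsilon}} \to e^{-tA_0}$, with the explicit rate $C(\epsilon) \to 0$, is furnished by Proposition \ref{linear}. The structural decomposition $F(u^{*}_{\epsilon}+w,\epsilon) = A_{\epsilon}u^{*}_{\epsilon} + F_{u}(u^{*}_{\epsilon},\epsilon)w + r(w,\epsilon)$, together with the uniform estimate $\sup_{\|w\|_{H^1(\Omega)}\leqslant\rho}\|r(w,\epsilon)-r(w,0)\|_{H^{-\alpha}(\Omega)} \to 0$ and the uniform local Lipschitz bound $\|r(w_1,\epsilon)-r(w_2,\epsilon)\|_{H^{-\alpha}(\Omega)} \leqslant k(\rho)\|w_1-w_2\|_{H^1(\Omega)}$ with $k(\rho)\to 0$, is precisely the content of Lemma \ref{lem1var}. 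The convergence $u^{*}_{\epsilon} \to u^{*}_{0}$ of the equilibria is guaranteed by Theorem \ref{teo4eq}.

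The third, and main, step is to show that the hyperbolic structure at $\epsilon = 0$ persists, with converging spectral data, for the perturbed linearizations $B_{\epsilon}$; this is the delicate part of the argument, since it is what links the concrete convergence estimates to the abstract hypotheses. I would first argue that $B_{\epsilon} \to B_{0}$ in the appropriate resolvent sense: the part $A_{\epsilon} \to A_0$ is controlled by Lemma \ref{lem4}, while $F'_{\epsilon}(u^{*}_{\epsilon}) \to F'_{0}(u^{*}_{0})$ in $\mathscr{L}(H^{1}(\Omega),H^{-\alpha}(\Omega))$ follows from item 4 of Lemma \ref{lem2eq} applied with $u^{*}_{\epsilon}\to u^{*}_{0}$. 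Since $u^{*}_{0}$ is hyperbolic (Definition \ref{def3eq}), we have $0 \notin \sigma(B_0)$ and the spectrum of $B_0$ splits across the imaginary axis; the resolvent convergence then forces, for $\epsilon$ small, the same gap to survive and the associated spectral projections $Q_{\epsilon}$ onto the finite-dimensional unstable subspace to converge to $Q_{0}$. This yields an exponential dichotomy for the semigroups $e^{-tB_{\epsilon}}$ with dichotomy exponents and constants uniform in $\epsilon$ and projections converging as $\epsilon \to 0$, which is exactly the hyperbolicity hypothesis of the abstract theorem.

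With the three steps above verified, \cite[Theorem 5.2]{antoniomarcone} applies directly and yields the symmetric convergence
$$
\sup_{u_{\epsilon} \in W^{u}_{\delta}(u^{*}_{\epsilon})} \inf_{u_{0} \in W^{u}_{\delta}(u^{*}_{0})} \|u_{\epsilon} - u_{0}\|_{H^{1}(\Omega)} + \sup_{u_{0} \in W^{u}_{\delta}(u^{*}_{0})} \inf_{u_{\epsilon} \in W^{u}_{\delta}(u^{*}_{\epsilon})} \|u_{\epsilon} - u_{0}\|_{H^{1}(\Omega)} \to 0
$$
as $\epsilon \to 0$, which is the assertion of the proposition.
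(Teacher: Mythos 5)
Your proposal is correct and follows essentially the same route as the paper: the paper gives no separate proof for this proposition, stating only that it follows from Lemma~\ref{lem1var} together with \cite[Theorem 5.2]{antoniomarcone}, which is exactly the reduction you carry out. The only difference is that you spell out the persistence of the spectral gap and the convergence of the dichotomy projections for $B_{\epsilon}=A_{\epsilon}-F'_{\epsilon}(u^{*}_{\epsilon})$ explicitly here, whereas the paper leaves that implicit at this point and only records the resolvent convergence $L_{\epsilon}(u^{*})^{-1}\to L_{0}(u^{*})^{-1}$ later, in the proof of Theorem~\ref{inferior}.
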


Now, we get the main result of this paper:

\newtheorem{inferior}[lem1var]{Theorem}
\begin{inferior}
\label{inferior}
Suppose that (H) holds and that every equilibria of (\ref{probabstract}) with $\epsilon=0$ is hyperbolic. Then, the family of global attractors of (\ref{probabstract}), $\{\mathscr{A}_{\epsilon}\}_{\epsilon\in[0,\epsilon_0]}$, is lower semicontinuous at $\epsilon=0$ in $H^{1}(\Omega)$.  
\end{inferior}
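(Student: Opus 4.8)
The plan is to derive the lower semicontinuity of the attractors from two ingredients already established in the excerpt: the gradient structure of the semigroups $T^\epsilon(t)$, and the continuity of the local unstable manifolds proved in Proposition~\ref{contvar}. Recall that the attractor of a gradient system is the union of the unstable manifolds of its equilibria, so under the hyperbolicity assumption we have $\mathscr{A}_\epsilon = \bigcup_{i=1}^m W^u(u^{*}_{\epsilon,i})$ for each $\epsilon\in[0,\epsilon_0]$, where by Theorem~\ref{teo5eq} the equilibria are finite in number and satisfy $u^{*}_{\epsilon,i}\to u^{*}_{0,i}$ in $H^1(\Omega)$ as $\epsilon\to 0$. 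Lower semicontinuity means that every point $u_0\in\mathscr{A}_0$ is approximated by points $u_\epsilon\in\mathscr{A}_\epsilon$; it therefore suffices to approximate points of each $W^u(u^{*}_{0,i})$ by points of the corresponding $W^u(u^{*}_{\epsilon,i})$.

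First I would reduce the global unstable manifold to its local version. Fix a point $u_0\in\mathscr{A}_0$; it lies on some $W^u(u^{*}_{0,i})$, so there is a global bounded solution $\xi_0$ with $\xi_0(0)=u_0$ and $\xi_0(t)\to u^{*}_{0,i}$ as $t\to-\infty$. Given $\eta>0$, choose $\delta>0$ as in Proposition~\ref{contvar}; by backward convergence there is a time $-\tau<0$ with $\xi_0(-\tau)\in W^u_\delta(u^{*}_{0,i})$. By the continuity of local unstable manifolds (Proposition~\ref{contvar}) there exist points $v_\epsilon\in W^u_\delta(u^{*}_{\epsilon,i})\subset\mathscr{A}_\epsilon$ with $v_\epsilon\to\xi_0(-\tau)$ in $H^1(\Omega)$ as $\epsilon\to 0$. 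Then I would flow these initial data forward for time $\tau$ and compare $T^\epsilon(\tau)v_\epsilon$ with $T^0(\tau)\xi_0(-\tau)=\xi_0(0)=u_0$, invoking the uniform-in-$\epsilon$ continuous dependence of the nonlinear semigroups on initial data and on $\epsilon$ provided by item~2 of Proposition~\ref{upperattrac}: for $t=\tau$ fixed, $\|T^\epsilon(\tau)v_\epsilon - T^0(\tau)\xi_0(-\tau)\|_{H^1(\Omega)}$ is controlled by $M(\tau)C(\epsilon)\tau^{-\gamma}$ together with the Lipschitz dependence on the (convergent) initial data, and hence tends to $0$. Since $T^\epsilon(\tau)v_\epsilon\in\mathscr{A}_\epsilon$ by invariance, this produces the desired approximating family and gives $\operatorname{dist}_{H^1}(u_0,\mathscr{A}_\epsilon)\to 0$ for each fixed $u_0$.

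Finally I would upgrade this pointwise approximation to the uniform statement $\sup_{u_0\in\mathscr{A}_0}\operatorname{dist}_{H^1}(u_0,\mathscr{A}_\epsilon)\to 0$. Because $\mathscr{E}_0$ is finite and $\mathscr{A}_0=\bigcup_{i=1}^m W^u(u^{*}_{0,i})$ is compact, one can reduce to finitely many local manifolds and use compactness to make the choice of $\tau$ and the bounds uniform over $u_0\in\mathscr{A}_0$; a standard contradiction/covering argument, exactly as in the abstract framework of \cite[Theorem~5.2]{antoniomarcone}, closes this gap. In fact, the cleanest route is to observe that all the hypotheses of that abstract theorem have now been verified — the spectral/gap and regularity conditions through Lemma~\ref{lem4} and Proposition~\ref{linear}, the nonlinearity conditions through Lemmas~\ref{lem1eq}, \ref{not1eq}, \ref{lem2eq} and \ref{lem1var}, and the hyperbolicity and continuity of equilibria through Theorems~\ref{teo4eq} and \ref{teo5eq} — so the lower semicontinuity follows directly by applying \cite[Theorem~5.2]{antoniomarcone}. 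The main obstacle, and the reason the local-to-global passage is delicate, is controlling the behavior of the flow over the (possibly long) backward time $\tau$ needed to enter the local unstable neighborhood while keeping all estimates uniform in $\epsilon$; this is precisely where the gradient structure (ruling out recurrence and guaranteeing convergence to equilibria) and the uniform attraction bound in Proposition~\ref{upperattrac} are indispensable.
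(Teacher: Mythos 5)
Your argument is correct and follows essentially the same route as the paper: gradient structure plus finiteness and convergence of the hyperbolic equilibria (Theorem~\ref{teo5eq}), continuity of the local unstable manifolds (Proposition~\ref{contvar}), and the uniform convergence of the nonlinear semigroups (item~2 of Proposition~\ref{upperattrac}) --- the paper simply delegates the local-to-global passage that you spell out by hand to \cite[Theorem 4.10.8]{haleat} (equivalently \cite[Theorem 3.10]{antoniomarcone}), after additionally verifying the norm convergence of the resolvents of the linearizations $L_\epsilon(u^*)=A_\epsilon-F'_\epsilon(u^*)$ so that the perturbed equilibria are themselves hyperbolic. The only slip is a citation: the abstract result that would give the attractor statement directly is \cite[Theorem 3.10]{antoniomarcone}, not \cite[Theorem 5.2]{antoniomarcone}, the latter being the local-unstable-manifold result already consumed in Proposition~\ref{contvar}.
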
 
\begin{proof}
Initially, we observe that the nonlinear semigroup $T^{0}(t)$ is a gradient system, $T^{\epsilon}(t)$ is asymptotically smooth and orbits of bounded sets are bounded, for any $\epsilon \in [0,\epsilon_0]$. Moreover, $T^{\epsilon}(t)\phi$ is continuous at $\epsilon=0$, uniformly with respect to $(t,\phi)$ in bounded sets of $\mathbb{R}^{+}\times H^{1}(\Omega)$, see Proposition~\ref{upperattrac}. 

Now, let us consider the operator $L_\epsilon(u^*) = A_{\epsilon}-F'_{\epsilon}(u^*)$ given by the linearization of \eqref{3e} at a hyperbolic equilibrium $u^* \in \mathscr{E}_{\epsilon}$. 
Due to Lemma \ref{lem2eq}, we can argue as in Lemma \ref{lem4} to prove that there exists $C(\epsilon) \geqslant 0$, $C(\epsilon) \to 0$ as $\epsilon \to 0$, such that 
\begin{eqnarray*}
\| \left( L_\epsilon(u^*) - L_0(u^*) \right) \, u \|_{H^{-\alpha}(\Omega)} 
 \leqslant  \hat C(\epsilon) \, \| A_0 u \|_{H^{-\alpha}(\Omega)} 
\leqslant C(\epsilon) \, \| L_0(u^*) u \|_{H^{-\alpha}(\Omega)} 
\end{eqnarray*}
for all $u \in H^{2-\alpha}(\Omega)$. Hence, using \cite[Theorem 3.3]{antoniomarcone}, we have that the resolvent operators ${L_\epsilon(u^*)}^{-1}$ converge to ${L_0(u^*)}^{-1}$ in operator norm.

As we have seen above, if all equilibrium points of (\ref{probabstract}) with $\epsilon=0$ are hyperbolic, then $\mathscr{E}_{0}=\{u^{*}_{0,1},...,u^{*}_{0,m}\}$ is finite and there exists $\epsilon_{0}>0$ such that $\mathscr{E}_{\epsilon}=\{u^{*}_{\epsilon,1},...,u^{*}_{\epsilon,m}\}$, for all $0<\epsilon\leqslant \epsilon_{0}$, with $u^{*}_{\epsilon,i}\to u^{*}_{0,i}$ in $H^{1}(\Omega)$, as $\epsilon\to0$, for all $i=1,...,m$, see Theorem~\ref{teo5eq}. 
Thus, for each $i=1,...,m$, we get from the convergence of $u^{*}_{\epsilon,i}$ to $u^{*}_{0,i}$ that ${L_\epsilon(u^{*}_{\epsilon,i})}^{-1}w_{\epsilon}$ converges to ${L_0(u^{*}_{0,i})}^{-1}w$ in $H^{1}(\Omega)$, as $\epsilon\to0$, whenever $w_{\epsilon}\to w$ in $H^{-\alpha}(\Omega)$, as $\epsilon\to0$.

Consequently, the hyperbolicity of $u^{*}_{0,i}$ implies the hyperbolicity of $u^{*}_{\epsilon,i}$, for all $i=1,...,m$ and $\epsilon\in(0,\epsilon_0]$, with $\epsilon_{0}$ sufficiently small. Moreover, from Proposition~\ref{contvar} we also have the continuity of the local unstable manifolds of $u^{*}_{\epsilon,i}$, for each $i=1,...,m$ fixed. 

With these considerations, the result follows from~\cite[Theorem 4.10.8]{haleat}. Note that it also is a consequence form \cite[Theorem 3.10]{antoniomarcone}.
\end{proof}

\bibliographystyle{plain}

\end{document}